\newtheorem{thm}{Theorem}
\newtheorem{lemma}{Lemma}
\theoremstyle{definition}
\begin{document}


\title[GENERALIZED NAVIER-STOKES EQUATIONS]
 {GENERALIZED NAVIER-STOKES EQUATIONS, ASSOCIATED WITH THE DOLBEAULT COMPLEX}

\author[A.~Shlapunov]{Alexander Shlapunov}
\address{Siberian Federal University,
	Institute of Mathematics and Computer Science,
	pr. Svobodnyi 79,
	660041 Krasnoyarsk,
	Russia}
\email{ashlapunov@sfu-kras.ru}  
\author[A.~Polkovnikov]{Alexander Polkovnikov}
\email{paskaattt@yandex.ru}  

\keywords{Dolbeault complex, generalized Stokes and Navier-Stokes equations, elliptic-parabolic operators.} 



\subjclass{35Qxx, 35Kxx, 35Nxx} 



\begin{abstract} 
We consider the Cauchy problem in the band $\mathbb{C}^{n}\times[0, T], n>1,T>0$, for a system of nonlinear 
differential equations structurally similar to the classical Navier-Stokes equations for an incompressible fluid. 
The main difference of this system is that it is generated not by the standard gradient operators $\nabla$, 
divergence div and rotor rot, but by the multidimensional Cauchy-Riemann operator $\overline{\partial}$ in 
$\mathbb{C}^{n}$, its formally adjoint operator $\overline{\partial}^{*}$ and the compatibility complex for 
$\overline{\partial}$, which is usually called the Dolbeault complex. The similarity of the structure makes it 
possible to prove for this problem the theorem of the existence of weak solutions and the open mapping theorem on 
the scale of specially constructed Bochner-Sobolev spaces. In addition, a criterion for the 
existence of a ``strong'' solution in these spaces is obtained.
\end{abstract}





\maketitle

\subsection{Introduction}

The Navier-Stokes equations, see, for example, the work \cite{Lad03} or the monographs \cite{Lad61}, \cite{Temam79}, and the bibliography therein, have been a challenge for both theoretical mathematicians and specialists in applied mathematics and hydrodynamics for many decades. In the work \cite{NSE19}, a more general problem was proposed within the framework of the theory of differential complexes, and in the work \cite{OMTNS21}, this problem was considered for the de Rham complex in $\mathbb{R}^{n}, n>1$, on the scale of specially constructed Bochner-Sobolev spaces (in the first degree of the complex, the corresponding system coincides with the Navier-Stokes equations for incompressible fluid). The Dolbeault complex in the complex space $\mathbb{C}^{n}$ shares many features with the de Rham complex, but differs from it in several important aspects, notably the general non-finiteness of the solution space of the operator in the zero degree of the complex and the subellipticity of the corresponding Neumann problems (whereas for the de Rham complex, the solutions in the zero degree of the complex are constant, and the Neumann problems are elliptic).

\subsection{The Generalized Stokes and Navier-Stokes Operators}

Let $\overline{\partial}$ denote the multidimensional Cauchy-Riemann operator in $\mathbb{C}^{n} \cong \mathbb{R}^{2n}$, $n \geq 2$, i.e., a column of differential operators $\left(\overline{\partial}_{1}, \ldots, \overline{\partial}_{n}\right)^{T}$, whose components are one-dimensional Cauchy-Riemann operators $\overline{\partial}_{j}=\frac{1}{2}\left(\frac{\partial}{\partial x_{j}} + \iota \frac{\partial}{\partial x_{j+n}}\right)$, where $\iota$ is the imaginary unit. Similar to the gradient operator in $\mathbb{R}^{n}$, the operator $\overline{\partial}$ naturally generates a complex
\[
0 \rightarrow C_{\Lambda^{0, 0}}^{\infty} \xrightarrow{\overline{\partial}^{0}} C_{\Lambda^{0, 1}}^{\infty} \xrightarrow{\overline{\partial}^{1}} \cdots \xrightarrow{\overline{\partial}^{n-1}} C_{\Lambda^{0, n}}^{\infty} \rightarrow 0,
\]
where ${C}^\infty_{\Lambda^{p, q}}$ is the space of exterior differential forms of bidegree $(p, q)$ with infinitely smooth coefficients with respect to the variables $z = \left(z_{1}, \ldots, z_{n}\right)$, $\overline{z} = \left(\overline{z}_{1}, \ldots, \overline{z}_{n}\right)$, $z_{j} = x_{j} + \iota x_{j+n}$, and $\overline{\partial}^{0} = \overline{\partial}$, $\overline{\partial}^{q+1} \circ \overline{\partial}^{q} = 0$ for $0 \leq q \leq n-1$ (see \cite[\S 1.2]{CDE95}). Denoting by $\left(\overline{\partial}^{q}\right)^{*}$ the formally adjoint operator to $\overline{\partial}^{q}$, we obtain a set of strongly elliptic operators (generalized Laplacians of the Dolbeault complex) $\Delta^{q} = \left(\overline{\partial}^{q}\right)^{*} \overline{\partial}^{q} + \overline{\partial}^{q-1}\left(\overline{\partial}^{q-1}\right)^{*}$ for $0 \leq q \leq n$, where, by default, $\overline{\partial}^{-1} = 0$ and $\overline{\partial}^{n} = 0$.

In the work \cite{NSE19}, a construction of generalized Navier-Stokes equations associated with differential complexes was proposed. With the refinements from \cite{Polk23}, in this context, we consider the following system of equations: given a $(0, q)$-differential form $f$ on $\mathbb{C}^{n} \times[0, T)$ and a $(0, q)$-differential form $u_{0}$ on $\mathbb{C}^{n}$, find a $(0, q)$-differential form $u$ and a $(0, q-1)$ differential form $p$ on $\mathbb{C}^{n} \times[0, T)$ satisfying
\begin{equation}\label{zad}
	\left\{\begin{array}{rcl}
		\partial_{t} u + \mu \Delta^{q} u + \mathcal{N}^{q} u + \overline{\partial}^{q-1} p &=& f \text{ in } \mathbb{C}^{n} \times(0, T), \\
		\left(\overline{\partial}^{q-1}\right)^{*} u &=& 0 \text{ in } \mathbb{C}^{n} \times(0, T), \\
		\left(\overline{\partial}^{q-2}\right)^{*} p &=& 0 \text{ in } \mathbb{C}^{n} \times(0, T), \\
		u(z, 0) &=& u_{0}(z), \, z \in \mathbb{C}^{n}, \\
		\sup\limits_{t \in [0, T]} \int\limits_{\mathbb{C}^{n}} |u(z, t)|^{2} \, dx + \int\limits_{0}^{T} \int\limits_{\mathbb{C}^{n}} \sum\limits_{j=1}^{2n}\left|\partial_{j} u(z, t)\right|^{2} \, dx \, dt &<& +\infty,
	\end{array}\right.
\end{equation}
where $\mu$, $T$ are fixed positive numbers, the coefficients of all forms depend on the parameter $t$, and $\mathcal{N}^{q}$ is an appropriate nonlinear operator. Note that the strong ellipticity of the operator $\Delta^{q}$ means that the operator $\partial_{t} + \mu \Delta^{q}$ is strongly uniformly parabolic in the sense of Petrovski. As for the nonlinearity $\mathcal{N}^{q}$, we will restrict ourselves to the following case. Fix two bilinear differential operators of zero order with constant coefficients:
$$
M_{1}^{q}: C_{\Lambda^{0, q+1}}^{\infty} \times C_{\Lambda^{0, q}}^{\infty} \rightarrow C_{\Lambda^{0, q}}^{\infty}, \quad M_{2}^{q}: C_{\Lambda^{0, q}}^{\infty} \times C_{\Lambda^{0, q}}^{\infty} \rightarrow C_{\Lambda^{0, q-1}}^{\infty}
$$
and set $\mathcal{N}^{q} u = M_{1}^{q}\left(\overline{\partial}^{q} u, u\right) + \overline{\partial}^{q-1} M_{2}^{q}(u, u)$.

Following the classical scheme of studying the Navier-Stokes equations, a theorem on the existence of weak solutions to problem \eqref{zad} can be obtained under additional conditions on the form $M_{1}^{q}$. Specifically, let $L_{\Lambda^{p, q}}^{r}$ denote the space of differential forms

\[
u = \sum_{\# I = p} \sum_{\# J = q} u_{I J} \, dz_{I} \wedge d\overline{z}_{J}
\]
of bidegree $(p, q)$ on $\mathbb{C}^{n}$ with components $u_{I J}$ in $L^{r}\left(\mathbb{C}^{n}\right)$; we endow it with the norm

\[
\|u\|_{L_{\Lambda^{p,q}}^{r}} = \left(\sum_{\# I = p} \sum_{\# J = q} \int_{\mathbb{R}^{2n}} \left|u_{I J}(x)\right|^{r} \, dx\right)^{1 / r}.
\]

Similarly, spaces of forms on $\mathbb{C}^{n}$ with Sobolev class components $W_{\Lambda^{p, q}}^{s, r}$ and $H_{\Lambda^{p, q}}^{s}$ are introduced. For convenience, the specific cases of the introduced spaces for forms of bidegree $(0, q)$ will be denoted respectively as $L_{q}^{r}$, $W_{q}^{s, r}$, and $H_{q}^{s}$. Next, let $\mathcal{V}_{\Lambda^{0, q}}$ denote the subspace in $C_{0, \Lambda^{0, q}}^{\infty}$ consisting of forms that satisfy $\left(\overline{\partial}^{q-1}\right)^{*} u = 0$ in $\mathbb{C}^{n}$, and let $\mathbf{H}_{q}^{s}$ be the closure of $\mathcal{V}_{\Lambda^{0, q}}$ in $H_{q}^{s}$ for $s \in \mathbb{Z}_{+}$. As usual, $\left(\mathbf{H}_{q}^{s}\right)^{\prime}$ denotes the dual space to $\mathbf{H}_{q}^{s}$. Furthermore, if $I = [0, T]$, $p \geq 1$, and $\mathcal{B}$ is a Banach space (of functions on $\mathbb{C}^{n}$), then $L^{r}(I, \mathcal{B})$ will denote the Bochner space of measurable mappings $u: I \rightarrow \mathcal{B}$ with the norm

\[
\|u\|_{L^{r}(I, \mathcal{B})} := \| \|u(\cdot, t)\left\|_{\mathcal{B}}\right\|_{L^{r}(I)}, r \geq 1,
\]

see, for example, \cite[ch. III, \S 1]{Temam79}. Similarly, $C(I, \mathcal{B})$ spaces are introduced, that is, the spaces of all mappings $u: I \rightarrow \mathcal{B}$ with the norm

\[
\|u\|_{C(I, \mathcal{B})} := \sup_{t \in I}\|u(\cdot, t)\|_{\mathcal{B}}.
\]

The following theorem shows that under additional constraints on the nonlinear term $\mathcal{N}^{q} u$, the problem \eqref{zad} has a weak solution.
\begin{thm}\label{slsol}
	Let $s \in \mathbb{N}$ and $s \leq n \leq (s+1)$. If
	\begin{equation}\label{key1}
		\left(M_{1}^{q}\left(\overline{\partial}^{q} w, v\right), v\right)_{L_{ q}^{2}} = 0 \text{ for all } v \in \mathcal{V}_{\Lambda^{0, q}},
	\end{equation}
	then for any pair $\left(f, u_{0}\right) \in L^{2}\left(I, \left(\mathbf{H}_{q}^{1}\right)^{\prime}\right) \times \mathbf{H}_{q}^{0}$, there exists a differential form $u \in L^{\infty}\left(I, \mathbf{H}_{q}^{0}\right) \cap L^{2}\left(I, \mathbf{H}_{q}^{1}\right)$ satisfying
	\begin{equation}\label{zadsl}
		\left\{\begin{array}{rcl}
			\frac{d}{dt}(u, v)_{L^{2}_q} + \mu\left(\overline{\partial}^{q} u, \overline{\partial}^{q} v\right)_{L^{2}_{q+1}} & = & \left\langle f - \mathcal{N}^{q} u, v\right\rangle_{\Lambda^{0, q}} \\
			u(\cdot, 0) & = & u_{0}
		\end{array}\right.
	\end{equation}
	for all $v \in \mathbf{H}_{q}^{s}$. Moreover, $\partial_{t} u \in L^{\frac{2}{n+1-s}}\left(I, \left(\mathbf{H}_{q}^{s}\right)^{\prime}\right)$.
\end{thm}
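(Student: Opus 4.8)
The plan is to follow the classical Faedo--Galerkin scheme adapted to the Dolbeault setting, exploiting the fact that the leading part $\partial_t + \mu\Delta^q$ is strongly parabolic and that the structural hypothesis \eqref{key1} kills the nonlinear term when tested against the solution itself.

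\medskip

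\emph{Step 1: Galerkin approximations.} Since $\mathcal{V}_{\Lambda^{0,q}}$ is dense in $\mathbf{H}_q^0$ (and in $\mathbf{H}_q^1$), I would pick a countable family $\{w_k\}\subset\mathcal{V}_{\Lambda^{0,q}}$ whose finite linear span is dense in $\mathbf{H}_q^1$, for instance by taking eigenforms of $\Delta^q$ restricted to the relevant space, and set $u_m(t)=\sum_{k=1}^m g_{km}(t)w_k$. Plugging $u_m$ into \eqref{zadsl} with $v=w_k$, $k=1,\dots,m$, yields a system of ODEs for the coefficients $g_{km}$ with locally Lipschitz right-hand side (the nonlinearity $\mathcal{N}^q$ is quadratic), hence a local-in-time solution by Picard--Lindel\"of. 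The pressure $p$ disappears in the weak formulation because $\overline{\partial}^{q-1}p$ is $L^2$-orthogonal to $\mathcal{V}_{\Lambda^{0,q}}$; this is where the condition $(\overline{\partial}^{q-1})^* v = 0$ is used.

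\medskip

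\emph{Step 2: a priori energy estimate.} Testing the Galerkin system with $v=u_m(t)$ and using \eqref{key1} to eliminate the $M_1^q$ contribution, and integrating by parts together with $(\overline{\partial}^{q-1})^*u_m=0$ to handle the $\overline{\partial}^{q-1}M_2^q(u_m,u_m)$ part (which is orthogonal to $u_m$ for the same reason), I obtain
\[
\frac{1}{2}\frac{d}{dt}\|u_m(t)\|_{L^2_q}^2 + \mu\|\overline{\partial}^q u_m(t)\|_{L^2_{q+1}}^2 = \langle f, u_m\rangle_{\Lambda^{0,q}}.
\]
Here I invoke the strong ellipticity / coercivity of $\Delta^q$: on $\mathbf{H}_q^1$ one has $\|\overline{\partial}^q u\|_{L^2_{q+1}}^2 \geq c\|u\|_{H^1_q}^2$ modulo an $L^2$ term (a G\r{a}rding-type inequality for the Dolbeault Laplacian on forms satisfying $(\overline{\partial}^{q-1})^*u=0$; note that on all of $\mathbb{C}^n$ the $\overline{\partial}$-harmonic forms in positive degree that are $L^2$ are trivial, so the estimate is in fact clean). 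A Young/Gronwall argument then gives the uniform bounds $u_m$ bounded in $L^\infty(I,\mathbf{H}_q^0)\cap L^2(I,\mathbf{H}_q^1)$, with the global existence of $u_m$ on all of $I$ following from the a priori bound.

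\medskip

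\emph{Step 3: estimate on $\partial_t u_m$ and passage to the limit.} This is the step most sensitive to the dimensional constraint $s\le n\le s+1$, and I expect it to be the main obstacle. The nonlinearity $\mathcal{N}^q u_m$ is quadratic of zero order in $u_m$, hence $M_1^q(\overline{\partial}^q u_m,u_m)$ and $\overline{\partial}^{q-1}M_2^q(u_m,u_m)$ must be controlled in $(\mathbf{H}_q^s)'$ using only the $L^\infty L^2\cap L^2 H^1$ bound; by Gagliardo--Nirenberg/Sobolev interpolation in real dimension $2n$, a product of two such forms lies in $L^{2/(n+1-s)}(I,(\mathbf{H}_q^s)')$ precisely when $s\le n\le s+1$, which is why the duality must be taken against $\mathbf{H}_q^s$ rather than $\mathbf{H}_q^1$. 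Combined with the bound on $\mu\Delta^q u_m$ and on $f$, this gives $\partial_t u_m$ bounded in $L^{2/(n+1-s)}(I,(\mathbf{H}_q^s)')$. Then I extract a subsequence with $u_m\rightharpoonup u$ weakly-$*$ in $L^\infty(I,\mathbf{H}_q^0)$, weakly in $L^2(I,\mathbf{H}_q^1)$, and $\partial_t u_m\rightharpoonup \partial_t u$ weakly in $L^{2/(n+1-s)}(I,(\mathbf{H}_q^s)')$; by the Aubin--Lions--Simon lemma (with the compact embedding $\mathbf{H}_q^1\hookrightarrow\hookrightarrow L^2_{q,\mathrm{loc}}$, truncating to compact sets or using a weight to handle $\mathbb{C}^n$) $u_m\to u$ strongly in $L^2_{\mathrm{loc}}$, which suffices to pass to the limit in the quadratic terms tested against the fixed basis forms $w_k$. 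Finally, density of $\mathrm{span}\{w_k\}$ in $\mathbf{H}_q^s$ extends \eqref{zadsl} to all $v\in\mathbf{H}_q^s$, the regularity $\partial_t u\in L^{2/(n+1-s)}(I,(\mathbf{H}_q^s)')$ is inherited from the uniform bound, and the time-continuity of $u$ into $(\mathbf{H}_q^s)'$ (from $u\in L^2 H^1$ and $\partial_t u\in L^{2/(n+1-s)}(\mathbf{H}_q^s)'$) makes the initial condition $u(\cdot,0)=u_0$ meaningful and verifiable by the standard integration-by-parts-in-$t$ identity against test functions vanishing at $t=T$.
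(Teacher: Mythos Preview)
Your proposal is correct and follows essentially the same approach the paper indicates: the paper's proof is a one-sentence pointer to the classical Faedo--Galerkin scheme with energy estimates and Gagliardo--Nirenberg inequalities in $\mathbb{R}^{2n}$, citing \cite{LiMa72}, \cite{Temam79}, \cite{Lad61}, and your outline fills in precisely these steps in the Dolbeault setting. The only cosmetic remark is that on the non-compact space $\mathbb{C}^n$ one should not invoke eigenforms of $\Delta^q$ for the Galerkin basis; any countable total family in $\mathcal{V}_{\Lambda^{0,q}}$ suffices, as you implicitly allow.
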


\begin{proof}\label{key2}
		The proof is conducted similarly to the theorem on the existence of weak solutions for the Navier-Stokes equations, based on energy estimates, Gagliardo-Nirenberg inequalities for ${\mathbb{R}}^{2n}$, and using the Galerkin method; see, for example, \cite{LiMa72}, \cite{Temam79}, or \cite{Lad61}.
		
\end{proof}

A key factor in the proof is the condition \eqref{key1}, which also holds for the nonlinearity arising in the Navier-Stokes equations, allowing us to prove the existence of a weak solution for the latter. As is usual for Navier-Stokes type equations, in this case, it is not possible to prove a uniqueness theorem for weak solutions $u$ of equations \eqref{zad}, i.e., satisfying \eqref{zadsl}, and the unknown form $p$ is identified (additively, up to a form with constant coefficients) only in the space of distributions using information about the cohomology of the Dolbeault complex; see \cite{OMTNS21}, \cite{Temam79}. For example, for the Navier-Stokes equations, it has long been known that the existence of regular solutions follows from the existence of a so-called strong solution, i.e., a weak solution in the Bochner space $L^\mathfrak{s}([0, T], L^\mathfrak{r}({\mathbb R^n}))$ with numbers $\mathfrak{r}$ and $\mathfrak{s}$ satisfying the relation $2/\mathfrak{s} + n/\mathfrak{r} = 1$ for $\mathfrak{r} > n$ (see, for example, \cite{Lad61}, \cite{Pro59}, \cite{Serr62} and the refinement \cite{ESS} for the case $\mathfrak{r} = n = 3$). For the problem \eqref{zad}, we can provide a similar criterion for the existence of a strong solution.

\begin{thm}\label{crstrong}
	Let the conditions of Theorem \eqref{slsol} be satisfied. If the solution to the problem \eqref{zadsl} lies in the space $L^\mathfrak{s}(I, L_{q}^\mathfrak{r})$ with some numbers $\mathfrak{r} > 2n$ and $\mathfrak{s}$ satisfying the relation $2/\mathfrak{s} + 2n/\mathfrak{r} = 1$, then the problem \eqref{zadsl} has a smooth solution given a smooth right-hand side.
\end{thm}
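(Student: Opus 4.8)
The plan is to adapt the classical Serrin--Prodi bootstrap argument to the Dolbeault setting, exploiting the fact that the principal part $\partial_t + \mu\Delta^q$ is strongly uniformly parabolic in the sense of Petrovski, exactly as in \cite{OMTNS21} for the de Rham case. First I would observe that once $u\in L^{\mathfrak s}(I,L^{\mathfrak r}_q)$ with $2/\mathfrak s+2n/\mathfrak r=1$ and $\mathfrak r>2n$, the scaling of the nonlinearity $\mathcal N^q u=M_1^q(\overline\partial^q u,u)+\overline\partial^{q-1}M_2^q(u,u)$ is subcritical relative to the parabolic gain of two derivatives: combining $u\in L^{\mathfrak r}$ with $u\in L^2(I,\mathbf H^1_q)\cap L^\infty(I,\mathbf H^0_q)$ through Hölder in space and time together with the Gagliardo--Nirenberg inequalities on $\mathbb R^{2n}$ quoted in the proof of Theorem~\ref{slsol}, one shows $\mathcal N^q u\in L^{\sigma}(I,L^\rho_q)$ for a pair $(\sigma,\rho)$ strictly better than the critical one. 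Concretely I would first estimate $M_2^q(u,u)$, which is quadratic and of order zero, and $M_1^q(\overline\partial^q u,u)$, which costs one derivative; the key point (as in Serrin) is that the relation $2/\mathfrak s+2n/\mathfrak r=1$ is precisely what makes $u\otimes u$ land in a space on which the heat semigroup of $\Delta^q$ gains enough regularity.

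Next I would set up the linear parabolic theory. Write the first equation of \eqref{zad} as $\partial_t u+\mu\Delta^q u=g:=f-\mathcal N^q u-\overline\partial^{q-1}p$, where the pressure term is handled exactly as in the incompressible case: applying $(\overline\partial^{q-1})^*$ and using $(\overline\partial^{q-1})^*u=0$, $(\overline\partial^{q-2})^*p=0$ one recovers $p$ from $\mathcal N^q u$ through the generalized Laplacian $\Delta^{q-1}$ (here the subellipticity of the $\overline\partial$-Neumann problem enters, but since we are on all of $\mathbb C^n$ the relevant operator is genuinely elliptic and one has the usual Calderón--Zygmund bounds). Then I would invoke the $L^p$-$L^q$ smoothing estimates for the strongly parabolic system $\partial_t+\mu\Delta^q$ (anisotropic parabolic Sobolev embeddings, e.g. from \cite{LiMa72}): if $g\in L^\sigma(I,W^{k,\rho}_q)$ then $u\in L^{\sigma}(I,W^{k+2,\rho}_q)$ up to lower-order terms controlled by the initial data, and correspondingly $u$ gains Hölder regularity once the parabolic Sobolev index $k+2-2n/\rho-2/\sigma$ becomes positive. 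Feeding the improved integrability of $u$ back into the estimate for $\mathcal N^q u$ and $p$ yields a strictly better $(\sigma,\rho)$ at each step; iterating finitely many times lands $u$ and hence $g$ in $C^\alpha$ in space-time.

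Having reached Hölder continuity, I would finish by a Schauder bootstrap: $u\in C^{\alpha,\alpha/2}$ makes $\mathcal N^q u\in C^{\alpha',\alpha'/2}$ (one loses at most the derivative $\overline\partial^q u$, which is recovered from parabolic Schauder estimates for $\partial_t+\mu\Delta^q$), and $p$ inherits Hölder regularity from elliptic Schauder estimates for $\Delta^{q-1}$. Then the classical parabolic Schauder theory for Petrovski-parabolic systems gives $u\in C^{2+\alpha,1+\alpha/2}$, and since $f$ is smooth one iterates indefinitely, differentiating the equation, to conclude $u,p\in C^\infty(\mathbb C^n\times(0,T))$; smoothness up to $t=0$ follows if $u_0$ and $f$ are compatible, otherwise only on $(0,T)$. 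The main obstacle, and the place where genuine care is needed, is the very first bootstrap step: one must verify that the exponent arithmetic built on the Dolbeault Gagliardo--Nirenberg inequalities on $\mathbb R^{2n}$ actually produces a \emph{strict} gain from the borderline hypothesis $2/\mathfrak s+2n/\mathfrak r=1$, and that the pressure $p$ — which is only defined modulo $\overline\partial^{q-1}$-closed forms and via a nonlocal solution operator — does not destroy that gain; this is exactly the subtlety that distinguishes the Dolbeault complex from the de Rham complex, because the zero-degree cohomology is infinite-dimensional and the Neumann problems are only subelliptic, so the mapping properties of the pressure-recovery operator must be tracked on the correct scale of the spaces $\mathbf H^s_q$ and their duals.
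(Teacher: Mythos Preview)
Your bootstrap via parabolic $L^p$--$L^q$ smoothing and Schauder theory is a valid route and is essentially Serrin's original argument transplanted to $\mathbb{R}^{2n}$. The paper's own proof, however, is a one-line appeal to a different classical path: Gagliardo--Nirenberg inequalities combined with the \emph{Galerkin} method, citing \cite{Lad61}, \cite{Serr62}, \cite{Pro59}. In that scheme one does not invoke heat-semigroup mapping properties or Schauder estimates directly; instead one derives successive higher-order energy estimates on the Galerkin approximants (the Serrin condition $2/\mathfrak{s}+2n/\mathfrak{r}=1$, $\mathfrak{r}>2n$, is exactly what closes the estimate for $\|\nabla u\|_{L^\infty(I,L^2)}$ and then for all higher norms), and passes to the limit. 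Your approach is arguably more transparent about where the regularity gain comes from at each step; the Galerkin route has the advantage that existence and regularity are obtained in a single construction, and it is what the paper actually invokes.

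One correction to your closing caveat: the subellipticity of the $\overline\partial$-Neumann problem is a \emph{boundary} phenomenon and is irrelevant on all of $\mathbb{C}^n$. The paper states explicitly that the $\Delta^q$ are strongly elliptic, and the pressure is recovered globally via the fundamental solution $\varphi^q$ (see Lemmas~\ref{proector} and~\ref{p.nabla.Bochner} later in the paper), so ordinary Calder\'on--Zygmund bounds apply without qualification. The infinite-dimensionality of the holomorphic cohomology does not obstruct the bootstrap either: only $\overline\partial^{q-1}p$ enters the equation, and the ambiguity in $p$ itself is fixed by a normalization (condition~\eqref{portog}) that has no bearing on regularity. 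So the ``main obstacle'' you flag is in fact not present in this setting.
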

\begin{proof}
	This follows from the Gagliardo-Nirenberg inequalities using the Galerkin method and is quite analogous to the classical case for the Navier-Stokes equations (see, for example, \cite{Lad61}, \cite{Serr62}, \cite{Pro59}).
\end{proof}

Note that for $n=1$, the Cauchy-Riemann operator generates a trivial compatibility complex consisting of only one operator, and therefore the problem \eqref{zadsl} makes no sense. For $n>1$, the real dimension of the space ${\mathbb{C}}^n$ is $2n>2$, which means that the standard Gagliardo-Nirenberg inequalities cannot guarantee the uniqueness theorem for the problem \eqref{zadsl} in this case.

Also note that for $q=1$, there is a natural nonlinearity $\mathcal{N}^{q} u=\overline{\star}\left(\overline{\star}^{1} u \wedge u\right)+\overline{\partial}^{0}|u|^{2}$, structurally corresponding to the Lamb form of nonlinearity included in the Navier-Stokes equations and satisfying \eqref{key1}; here, $\star$ is the Hodge star operator on differential forms, and $\overline{\star} v := \overline{(\star v)}$.

To find more regular solutions of a similar problem for the de Rham complex, the work \cite{OMTNS21} introduced a scale of Bochner-Sobolev functional spaces, one of the modifications of which was later used in solving similar problems for elliptic differential complexes on smooth compact Riemannian manifolds; it is also suitable for the problem \eqref{zad}. More precisely, for $s, k \in \mathbb{Z}_{+}$, let $B_{\mathrm{vel}, q}^{k, 2s, s}$ denote the set of all "velocities", i.e., $(0, q)$-forms $u$ from $C\left(I, \mathbf{H}_{q}^{k+2s}\right) \cap L^{2}\left(I, \mathbf{H}_{q}^{k+1+2s}\right)$ such that

\[
\partial_{x}^{\alpha} \partial_{t}^{j} u \in C\left(I, \mathbf{H}_{q}^{k+2s-|\alpha|-2j}\right) \cap L^{2}\left(I, \mathbf{H}_{q}^{k+1+2s-|\alpha|-2j}\right),
\]

if $|\alpha|+2j \leq 2s$. We will equip the space $B_{\mathrm{vel}, q}^{k, 2s, s}$ with the natural norm

\[
\|u\|_{B_{\mathrm{vel}, q}^{k, s, s}} := \left(\sum_{i=0}^{k} \sum_{|\alpha|+2j \leq 2s}\left\|\partial_{x}^{\alpha} \partial_{t}^{j} u\right\|_{i, q, T}^{2}\right)^{1 / 2},
\]

where $\|u\|_{i, q, T} = \left(\left\|\nabla^{i} u\right\|_{C\left(I, L^2_{\Lambda^{0, q}}\right)}^{2} + \mu\left\|\nabla^{i+1} u\right\|^{2}_{L^{2}\left(I, L^{2}_{\Lambda^{0, q}}\right)}\right)^{1 / 2}$.

Similarly, for $s, k \in \mathbb{Z}_{+}$, let $B_{\mathrm{for}, q}^{k, 2s, s}$ consist of all "external forces", i.e., $(0, q)$-forms $f$ from $C\left(I, H_{q}^{2s+k}\right) \cap L^{2}\left(I, H_{q}^{2s+k+1}\right)$, for which, if $|\alpha|+2j \leq 2s$, it holds that

\[
\partial_{x}^{\alpha} \partial_{t}^{j} f \in C\left(I, H_{q}^{k}\right) \cap L^{2}\left(I, H_{q}^{k+1}\right).
\]

If $f \in B_{\mathrm{for}, q}^{k, 2s, s}$, then indeed

\[
\partial_{x}^{\alpha} \partial_{t}^{j} f \in C\left(I, H_{q}^{k+2(s-j)-|\alpha|}\right) \cap L^{2}\left(I, H_{q}^{k+1+2(s-j)-|\alpha|}\right)
\]

for all $\alpha$ and $j$ satisfying $|\alpha|+2j \leq 2s$. We will equip the space $B_{\mathrm{for}, q}^{k, 2s, s}$ with the natural norm

\[
\|f\|_{B_{\mathrm{for}, q}^{k, 2s, s}} = \left(\sum_{\substack{|\alpha|+2j \leq 2s \\ 0 \leq i \leq k}}\left\|\nabla^{i} \partial_{x}^{\alpha} \partial_{t}^{j} f\right\|_{C\left(I, L_{q}^{2}\right)}^{2} + \left\|\nabla^{i+1} \partial_{x}^{\alpha} \partial_{t}^{j} f\right\|_{L^{2}\left(I, L_{q}^{2}\right)}^{2}\right)^{1 / 2}.
\]

Finally, fix a function $h_{0} \in C_{0}^{\infty}\left(\mathbb{C}^{n}\right)$ such that
\begin{equation}\label{portog.0}
	\int_{\mathbb{R}^{2n}} h_{0}(x) \, dx = 1,
\end{equation}
and define the space $B_{\mathrm{pre}, q-1}^{k+1, 2s, s}$ for "pressure" $p$ as consisting of all $(0, q-1)$-forms from $C\left(I, H_{\mathrm{loc}, q-1}^{2s+2+1}\right) \cap L^{2}\left(I, H_{\mathrm{loc}, q-1}^{2s+k+2}\right)$, satisfying
\begin{equation}\label{portog}
	\int{\mathbb{R}^{2n}} p_{0 J}(x) h_{0}(x) \, dx = 0 \text{ for all } t \in [0, T] \text{ and } \# J = q-1,
\end{equation}
and such that $\overline{\partial}^{q-1} p \in B_{\mathrm{for}, q}^{k, 2s, s}$,
\begin{align}
	\left( \overline{\partial}^{q-2}\right)^{*} p &= 0 \text{ in } \mathbb{C}^{n} \times [0, T], \label{eq1}\\
	\|p\|_{L^{2}\left(I, C_{b, \Lambda^{0, q-1}}\right)} &< +\infty \text{ when } 2s+k = n+1, \label{eq2}\\
	\|p\|_{L^{2}\left(I, C_{b, \Lambda^{0, q-1}}\right)} + \|p\|_{C\left(I, C_{b, \Lambda^{0, q-1}}\right)} &< +\infty \text{ when } 2s+k > n+1; \label{eq3}
\end{align}
here, $C_{b}$ is the space of bounded continuous functions in $\mathbb{C}^{n}$ with the norm $\|w\|_{C_{b}} = \sup_{z \in \mathbb{C}^{n}} |w(z)|$.
This space can be equipped with the norm

 \[
 \|p\|_{B_{\mathrm{pre}, q-1}^{k+1, 2s, s}} = \left\lbrace 
 \begin{array}{ll}
 	\|\overline{\partial}^{q-1} p\|_{B_{\mathrm{for}, q}^{k, 2s, s}}, & 2s + k \leq n, \\
 	\|\overline{\partial}^{q-1} p\|_{B_{\mathrm{for}, q}^{k, 2s, s}} + \|p\|_{L^{2}\left(I, C_{b, \Lambda^{0, q-1}}\right)}, & 2s + k = n + 1, \\
 	\|\overline{\partial}^{q-1} p\|_{B_{\mathrm{for}, q}^{k, 2s, s}} + \|p\|_{L^{2}\left(I, C_{b, \Lambda^{0, q-1}}\right)} + \|p\|_{C\left(I, C_{b, \Lambda^{0, q-1}}\right)}, & 2s + k > n + 1.
 \end{array}
 \right. 
 \]

 Clearly, the spaces $B_{\mathrm{vel}, q}^{k, 2s, s}$, $B_{\mathrm{for}, q}^{k, 2s, s}$, and $B_{\mathrm{pre}, q-1}^{k+1, 2s, s}$ are Banach spaces. Additionally, we will consider the linearization of the problem \eqref{zad}, and for this purpose, we denote
 $$
 \mathbf{B} _q (w, u) = M^{q}_1 (\overline{\partial}^q w, u) + \overline{\partial}^{q-1} M^{q}_2 (w, u) + 
 M^{q}_1 (\overline{\partial}^{q} u, w) + \overline{\partial}^{q-1} M^{q}_2 (u, w),
 $$
 for forms $u$, $w$ of bidegree $(0, q)$.

\begin{lemma}
	\label{l.NS.cont.0}
	Let $n \geq 2$, $s \in \mathbb{N}$, $k \in \mathbb{Z}_+$, $2s + k > n - 1$. Then the mappings
	$$
	\begin{array}{rrcl}
		\overline{\partial}^{q-1} :
		& B^{k+1, 2(s-1), s-1}_{\mathrm{pre}, {q-1}} 
		& \to
		& B^{k, 2(s-1), s-1}_{\mathrm{for}, {q}}, \\
		\Delta :
		& B^{k, 2s, s}_{\mathrm{vel}, {q}}
		& \to
		& B^{k, 2(s-1), s-1}_{\mathrm{for}, {q}}, \\
		\partial_t :
		& B^{k, 2s, s}_{\mathrm{vel}, {q}} 
		& \to
		& B^{k, 2(s-1), s-1}_{\mathrm{for}, {q}}, \\
		\mathcal{N}^q :
		& B^{k, 2s, s}_{\mathrm{vel}, {q}}  
		& \to
		& B^{k, 2(s-1), s-1}_{\mathrm{for}, {q}},
	\end{array}
	$$
	are continuous. Moreover, if $w \in B^{k+2, 2(s-1), s-1}_{\mathrm{vel}, {q}}$, then the mapping
	$$
	\begin{array}{rrcl}
		\mathbf{B}_q (w, \cdot) :
		& B^{k, 2s, s}_{\mathrm{vel}, {q}}  
		& \to
		& B^{k, 2(s-1), s-1}_{\mathrm{for}, {q}},
	\end{array}
	$$
	is also continuous, and for all $u, w \in B^{k+2, 2(s-1), s-1}_{\mathrm{vel}, {q}}$,
	\begin{equation}\label{eq.B.pos.bound}
		\| \mathbf{B}_q(w, u)\|_{B^{k, 2(s-1), s-1}_{\mathrm{for}, {q}}} 
		\leq c^{q}_{s, k} 
		\|w\|_{B^{k+2, 2(s-1), s-1}_{\mathrm{vel}, {q}}} 
		\|u\|_{B^{k+2, 2(s-1), s-1}_{\mathrm{vel}, {q}}},
	\end{equation}
	where $c^{q}_{s, k}$ is a positive constant independent of $u$ and $w$.
\end{lemma}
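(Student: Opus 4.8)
The plan is to handle the three linear differential operators by direct bookkeeping on the Sobolev indices, and the nonlinear maps by Leibniz's rule combined with Sobolev multiplication and embedding inequalities on $\mathbb{R}^{2n}$, the hypothesis $2s+k>n-1$ being precisely what is needed for the latter.

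\emph{The linear maps.} For $\overline{\partial}^{q-1}$ there is nothing to do: by the very definition of $B^{k+1,2(s-1),s-1}_{\mathrm{pre},q-1}$ one has $\overline{\partial}^{q-1}p\in B^{k,2(s-1),s-1}_{\mathrm{for},q}$, and $\|\overline{\partial}^{q-1}p\|_{B^{k,2(s-1),s-1}_{\mathrm{for},q}}$ is one of the summands of $\|p\|_{B^{k+1,2(s-1),s-1}_{\mathrm{pre},q-1}}$, so the operator norm is at most $1$. For $\Delta$ and $\partial_t$, which are constant-coefficient operators of space–time order $(2,0)$ and $(0,1)$, the scales $B_{\mathrm{vel}}$ and $B_{\mathrm{for}}$ are designed so that each lowers the parabolic order by exactly one step. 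Concretely, for $|\alpha|+2j\le 2(s-1)$ the form $\partial_x^\alpha\partial_t^j\Delta u$ is a finite linear combination of forms $\partial_x^\beta\partial_t^j u$ with $|\beta|+2j=|\alpha|+2+2j\le 2s$, and $\partial_x^\alpha\partial_t^j\partial_t u=\partial_x^\alpha\partial_t^{j+1}u$ with $|\alpha|+2(j+1)\le 2s$; by the defining inclusions of $B^{k,2s,s}_{\mathrm{vel},q}$ each such form lies in $C(I,\mathbf{H}^{k}_q)\cap L^2(I,\mathbf{H}^{k+1}_q)\subseteq C(I,H^{k}_q)\cap L^2(I,H^{k+1}_q)$, and the base regularity $C(I,H^{k+2s-2}_q)\cap L^2(I,H^{k+2s-1}_q)$ of the target follows the same way. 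Summing over the finitely many terms gives $\|\Delta u\|_{B^{k,2(s-1),s-1}_{\mathrm{for},q}}+\|\partial_t u\|_{B^{k,2(s-1),s-1}_{\mathrm{for},q}}\le c\,\|u\|_{B^{k,2s,s}_{\mathrm{vel},q}}$.

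\emph{The nonlinear maps.} Since $\mathcal{N}^q u=\tfrac12\mathbf{B}_q(u,u)$ and $B^{k,2s,s}_{\mathrm{vel},q}$ embeds continuously into $B^{k+2,2(s-1),s-1}_{\mathrm{vel},q}$ (when $|\alpha|+2j\le 2(s-1)$ its floor regularity $\mathbf{H}^{k+2s-|\alpha|-2j}_q$ never drops below $\mathbf{H}^{k+2}_q$), it is enough to prove the bilinear bound \eqref{eq.B.pos.bound}; continuity of $\mathbf{B}_q(w,\cdot)$ for fixed $w$ and of $\mathcal{N}^q$ then follow by bilinearity and polarization. By the form of $M_1^q,M_2^q$ (zero-order, constant-coefficient, bilinear), every component of $\mathbf{B}_q(w,u)$ is a finite sum of products $a\cdot b$ with $a$ a component of $w,u,\overline{\partial}^q w$ or $\overline{\partial}^q u$ (hence carrying at most one spatial derivative of $w$ or $u$) and $b$ a component of $u$ or $w$, possibly with one further spatial derivative applied outside by $\overline{\partial}^{q-1}$. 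Thus everything reduces to bounding, for each $(\alpha,j)$ with $|\alpha|+2j\le 2(s-1)$, the $C(I,L^2)$- and $L^2(I,L^2)$-norms of $\partial_x^\alpha\partial_t^j$ and of $\nabla\partial_x^\alpha\partial_t^j$ applied to a product $v_1 v_2$ with $v_1,v_2\in\{w,u\}$ and at most one spatial derivative on one of them. I would expand by Leibniz and estimate each term $(\partial^A v_1)(\partial^B v_2)$ via the Sobolev multiplication inequality on $\mathbb{R}^{2n}$,
\[
\|g_1 g_2\|_{H^{m_0}(\mathbb{R}^{2n})}\le c\,\|g_1\|_{H^{m_1}(\mathbb{R}^{2n})}\|g_2\|_{H^{m_2}(\mathbb{R}^{2n})},\qquad m_0\le\min(m_1,m_2),\ m_1+m_2-m_0>n,
\]
(equivalently, by placing one factor in $C_b$ through the embedding $H^m(\mathbb{R}^{2n})\hookrightarrow C_b$, $m>n$, and the other in $L^2$), and combine the time norms by Hölder's inequality in $t$; this yields $\|\mathbf{B}_q(w,u)\|_{B^{k,2(s-1),s-1}_{\mathrm{for},q}}\le c^q_{s,k}\,\|w\|_{B^{k+2,2(s-1),s-1}_{\mathrm{vel},q}}\|u\|_{B^{k+2,2(s-1),s-1}_{\mathrm{vel},q}}$. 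The higher floor $\mathbf{H}^{k+2}_q$ of the $B^{k+2,\dots}_{\mathrm{vel}}$-space is exactly what lets the factor that must absorb the extra derivatives do so, and the threshold $2s+k>n-1$ is exactly what makes the relevant index inequalities $m_1+m_2-m_0>n$ hold — for instance, the base-regularity requirement $M_2^q(w,u)\in L^2(I,H^{k+2s}_q)$ follows from $w\in C(I,H^{k+2s}_q)$, $u\in L^2(I,H^{k+2s+1}_q)$ together with $(k+2s)+(k+2s+1)-(k+2s)=k+2s+1>n$.

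\emph{Main obstacle.} The linear estimates are routine bookkeeping; the genuine work is the nonlinear one, and it is the only place the hypothesis on $2s+k$ enters. One must decide, for every Leibniz term and every admissible multi-index $(\alpha,j)$, which of the two factors to embed into $C_b$ and which to keep in $L^2$, and verify in each case that the Sobolev orders supplied by the defining (and hence derived) inclusions of the relevant $B_{\mathrm{vel}}$-spaces meet the multiplication threshold; the bound is tightest for the top-order terms ($|\alpha|+2j=2(s-1)$ together with the extra $\nabla$ from $\overline{\partial}^{q-1}$), where $2s+k>n-1$ is saturated, and some care is needed so that $t$-integrability is not lost when one factor is measured in $C(I,\cdot)$ and the other in $L^2(I,\cdot)$. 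Apart from this the argument is parallel to the de Rham case treated in \cite{OMTNS21}.
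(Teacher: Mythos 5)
Your proposal is correct and follows essentially the same route as the paper, whose proof is only a two-line citation: the linear mappings are handled directly from the definitions of the spaces, and the bilinear estimate is obtained from Gagliardo--Nirenberg/Sobolev product inequalities on $\mathbb{R}^{2n}$ as in \cite[Lemma 3.5]{OMTNS21} and \cite[Theorem 1.4]{Polk23}. Your index bookkeeping (in particular the verification that $m_1+m_2-m_0\geq k+2s+1>n$ is exactly the hypothesis $2s+k>n-1$) correctly fills in the details the paper leaves to the references.
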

\begin{proof}
For the linear operators $\overline{\partial}$, $\partial_t$, and $\Delta^q$, the statement of the lemma follows directly from the definitions of the spaces, and for the operators $\mathbf{B}_q (w, \cdot)$ and $\mathcal{N}^q$, it follows from the Gagliardo-Nirenberg inequalities; see, for example, \cite[Lemma 3.5]{OMTNS21} or \cite[Theorem 1.4]{Polk23}.
\end{proof}

Next, let $ \varphi^q $ denote the fundamental solution of the generalized Laplacian operator $ \Delta^{q} $ (see, for example, \cite{CDE95}). Consider the projection
$\mathrm {P}^q$ of the space $ B^{k, 2(s-1), s-1}_{\mathrm{for}, {q}}$ onto the kernel of the operator $ (\overline{\partial}^{q-1})^* $.

\begin{lemma}\label{proector} 
	Let $s, k \in \mathbb{Z}_+$. 
	For each $q$, the pseudodifferential operator $\mathrm {P}^q = \varphi^q(\overline{\partial}^q)^* \overline{\partial}^q$ induces a continuous mapping
	\begin{equation}\label{cont.proetor}
		\mathrm {P}^q: B^{k, 2(s-1), s-1}_{\mathrm{for}, {q}} \to B^{k, 2(s-1), s-1}_{\mathrm{vel}, {q}},
	\end{equation}
	such that
	\begin{equation*}
		\mathrm {P}^q \circ \mathrm {P}^q u = \mathrm {P}^q u,\quad
		(\mathrm {P}^q u, v)_{L^{2}_{q}} = (u, \mathrm {P}^q v)_{L^{2}_{q}},\quad
		(\mathrm {P}^q u, (I - \mathrm {P}^q) u)_{L^{2}_{q}} = 0
	\end{equation*}
	for all $u, v \in C_{0, \Lambda^{0, q}}^{\infty}$.
\end{lemma}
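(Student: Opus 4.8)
The plan is to treat $\mathrm{P}^q$ as a translation–invariant operator, given on $C_{0,\Lambda^{0,q}}^{\infty}$ by convolution with $\varphi^q(\overline{\partial}^q)^*\overline{\partial}^q$ and regarded more generally as a zero–order Fourier multiplier, and to split the argument into an algebraic part (the three displayed identities, required only on compactly supported smooth forms) and an analytic part (the continuity of \eqref{cont.proetor}). The algebraic skeleton is the Hodge–theoretic intertwining of the generalized Laplacians with the operators of the Dolbeault complex: $\overline{\partial}^q\Delta^q=\Delta^{q+1}\overline{\partial}^q$, $(\overline{\partial}^q)^*\Delta^{q+1}=\Delta^q(\overline{\partial}^q)^*$, and likewise in degree $q-1$. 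Since $n\ge 2$ the ambient real dimension is $2n\ge 4$, so each $\varphi^q$ is a matrix–valued tempered distribution homogeneous of degree $2-2n$, with no logarithmic terms, uniquely characterised as the fundamental solution of $\Delta^q$ in that class; hence the intertwinings of the $\Delta^q$ pass to the fundamental solutions: $\overline{\partial}^q\varphi^q=\varphi^{q+1}\overline{\partial}^q$, $(\overline{\partial}^q)^*\varphi^{q+1}=\varphi^q(\overline{\partial}^q)^*$, $(\overline{\partial}^{q-1})^*\varphi^q=\varphi^{q-1}(\overline{\partial}^{q-1})^*$, together with $\varphi^q\Delta^q=\Delta^q\varphi^q=\mathrm{Id}$ on $C_{0,\Lambda^{0,q}}^{\infty}$.

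Granting these relations, I would derive the three identities by direct computation. Moving $\varphi^q$ across one gets $\mathrm{P}^q=\varphi^q(\overline{\partial}^q)^*\overline{\partial}^q=(\overline{\partial}^q)^*\varphi^{q+1}\overline{\partial}^q=(\overline{\partial}^q)^*\overline{\partial}^q\varphi^q$, and since $(\overline{\partial}^q)^*\overline{\partial}^q$ and convolution with $\varphi^q$ are each formally self–adjoint (the latter because $\Delta^q$ is), so is $\mathrm{P}^q$, giving $(\mathrm{P}^q u,v)_{L^2_q}=(u,\mathrm{P}^q v)_{L^2_q}$. For idempotency, $\overline{\partial}^{q+1}\overline{\partial}^q=0$ turns $\overline{\partial}^q(\overline{\partial}^q)^*\overline{\partial}^q$ into $\Delta^{q+1}\overline{\partial}^q$, so $\overline{\partial}^q\mathrm{P}^q u=\varphi^{q+1}\overline{\partial}^q(\overline{\partial}^q)^*\overline{\partial}^q u=\varphi^{q+1}\Delta^{q+1}(\overline{\partial}^q u)=\overline{\partial}^q u$, whence $\mathrm{P}^q\mathrm{P}^q u=\varphi^q(\overline{\partial}^q)^*(\overline{\partial}^q u)=\mathrm{P}^q u$. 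The orthogonality relation is then formal: $(\mathrm{P}^q u,(I-\mathrm{P}^q)u)_{L^2_q}=(\mathrm{P}^q u,u)_{L^2_q}-(\mathrm{P}^q u,\mathrm{P}^q u)_{L^2_q}$, and both terms equal $(u,\mathrm{P}^q u)_{L^2_q}$ in view of $(\mathrm{P}^q)^*=\mathrm{P}^q$ and $(\mathrm{P}^q)^2=\mathrm{P}^q$. Finally, the same manipulation together with $(\overline{\partial}^{q-1})^*(\overline{\partial}^q)^*=0$ gives $(\overline{\partial}^{q-1})^*\mathrm{P}^q u=\varphi^{q-1}(\overline{\partial}^{q-1})^*(\overline{\partial}^q)^*\overline{\partial}^q u=0$, confirming that $\mathrm{P}^q$ projects onto the kernel of $(\overline{\partial}^{q-1})^*$.

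For the continuity in \eqref{cont.proetor}, $\mathrm{P}^q$ is a spatial convolution with constant coefficients, hence commutes with every $\partial_x^\alpha$ and with $\partial_t$, so $\partial_x^\alpha\partial_t^j\mathrm{P}^q u=\mathrm{P}^q\partial_x^\alpha\partial_t^j u$ and the estimate reduces to showing that, for each fixed $t$ and each $m\in\mathbb{Z}_+$, $\mathrm{P}^q$ maps $H^m_q(\mathbb{C}^n)$ boundedly into $\mathbf{H}^m_q$ with $t$–independent norm. Its Fourier multiplier is $\widehat{\varphi^q}(\xi)\,\sigma((\overline{\partial}^q)^*\overline{\partial}^q)(\xi)$, a matrix–valued function on $\mathbb{R}^{2n}\setminus\{0\}$ that is homogeneous of degree $0$ (degree $-2$ from $\widehat{\varphi^q}$ and degree $+2$ from the symbol of the second–order operator) and smooth off the origin, hence bounded; a bounded Fourier multiplier acts boundedly on every $H^m_q$, uniformly in $t$ since the operator does not depend on $t$. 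That $\mathrm{P}^q u(\cdot,t)$ lies in $\mathbf{H}^m_q$ follows from $(\overline{\partial}^{q-1})^*\mathrm{P}^q u=0$ together with the identification $\mathbf{H}^m_q=\{\,v\in H^m_q:(\overline{\partial}^{q-1})^*v=0\,\}$, obtained on $\mathbb{R}^{2n}$ by mollification and a cut–off argument with a $\overline{\partial}$–type correction, exactly as in the construction of Leray's spaces. Preservation of $C(I,\cdot)$– and $L^2(I,\cdot)$–regularity in $t$ is inherited from $u$ because $\mathrm{P}^q$ is a fixed bounded linear operator; summing over $|\alpha|+2j\le 2(s-1)$ and $0\le i\le k$ yields $\|\mathrm{P}^q u\|_{B^{k,2(s-1),s-1}_{\mathrm{vel},q}}\le C\,\|u\|_{B^{k,2(s-1),s-1}_{\mathrm{for},q}}$, and density of $C_{0,\Lambda^{0,q}}^{\infty}$ extends $\mathrm{P}^q$ to the asserted continuous map.

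The only genuinely non–formal point, and the step I expect to cost the most care, is the passage from the intertwining of the Laplacians to that of their fundamental solutions: this rests on the uniqueness of $\varphi^q$ in the appropriate class of homogeneous tempered distributions (which is where $n\ge 2$, hence $2n\ge 4$, is used to rule out logarithmic terms), and — for the continuity statement — on the identification of $\mathbf{H}^m_q$ with the $(\overline{\partial}^{q-1})^*$–closed subspace of $H^m_q$, so that the range of $\mathrm{P}^q$ literally sits inside the space used to define $B^{k,2(s-1),s-1}_{\mathrm{vel},q}$. Everything else is the routine $L^2$–boundedness of a zero–order Fourier multiplier and bookkeeping of norms across the scale.
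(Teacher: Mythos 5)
Your proposal is correct and follows essentially the same route as the paper: both rest on the identity $v=\varphi^q\Delta^q v=\varphi^q(\overline{\partial}^q)^*\overline{\partial}^q v+\varphi^q\overline{\partial}^{q-1}(\overline{\partial}^{q-1})^*v$ for compactly supported $v$, the complex property $\overline{\partial}^{q+1}\circ\overline{\partial}^q=0$ for idempotency, and the resulting formal self-adjointness for the orthogonality relation. The only differences are cosmetic improvements on your side — you justify the intertwining $\varphi^q(\overline{\partial}^q)^*\overline{\partial}^q=(\overline{\partial}^q)^*\overline{\partial}^q\varphi^q$ explicitly via uniqueness of the homogeneous fundamental solution (the paper uses it silently), and you spell out the continuity of \eqref{cont.proetor} as a zero-order Fourier multiplier bound plus the identification of $\mathbf{H}^m_q$ with the $(\overline{\partial}^{q-1})^*$-closed subspace of $H^m_q$, where the paper simply cites Lemma \ref{l.NS.cont.0} and the commutation with $\partial_t$.
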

\begin{proof}
	Indeed, let $u, v \in C_{0, \Lambda^{0, q}}^{\infty}$. Since $ \varphi^q $ is the fundamental solution of the generalized Laplacian operator, we have
	\begin{equation}\label{eq22}
		v = \Delta^{q} \varphi^q v = \varphi^q \Delta^{q} v = \varphi^q (\overline{\partial}^{q})^{*} \overline{\partial}^{q} v + \varphi^q \overline{\partial}^{q-1} (\overline{\partial}^{q-1})^{*} v.
	\end{equation}
	Due to the fact that $\overline{\partial}^{q+1} \circ \overline{\partial}^{q} = 0$, we have

	\[
	\mathrm{P}^q \circ \mathrm{P}^q u = ((\overline{\partial}^{q})^* \overline{\partial}^{q} \varphi^q) \circ ((\overline{\partial}^{q})^* \overline{\partial}^{q} \varphi^q) u =
	\]

	\[
	((\overline{\partial}^{q})^* \overline{\partial}^{q} \varphi^q (\overline{\partial}^{q})^* \overline{\partial}^{q} \varphi^q) u = (\overline{\partial}^{q})^* \overline{\partial}^{q} \Delta^q \varphi^q u = \mathrm{P}^q u.
	\]

Next, from the formula \eqref{eq22}, we obtain the equality

\[
\mathrm{P}^q = I - \overline{\partial}^{q-1} (\overline{\partial}^{q-1})^* \varphi^q
\]

for all functions with compact support in $\mathbb{R}^{2n}$. Therefore,

\[
(\mathrm{P}^q u, v)_{L^2_q} = (\mathrm{P}^q u, \mathrm{P}^q v + \overline{\partial}^{q-1} (\overline{\partial}^{q-1})^* \varphi^q v)_{L^2_q} = (\mathrm{P}^q u, \mathrm{P}^q v)_{L^2_q} =
\]

\[
(u - \overline{\partial}^{q-1} (\overline{\partial}^{q-1})^* \varphi^q u, \mathrm{P}^q v)_{L^2_q} = (u, \mathrm{P}^q v)_{L^2_q},
\]

since $(\overline{\partial}^{q-1})^* \mathrm{P}^q = 0$. On the other hand,

\[
(\mathrm{P}^q u, (I - \mathrm{P}^q) u)_{L^2_q} = (\mathrm{P}^q u, u)_{L^2_q} - (\mathrm{P}^q u, \mathrm{P}^q u)_{L^2_q} = 0.
\]

Finally, the continuity of the mapping $\mathrm{P}^q: B^{k, 2(s-1), s-1}_{\mathrm{for}, {q}} \to B^{k, 2(s-1), s-1}_{\mathrm{vel}, {q}}$ follows from Lemma \ref{l.NS.cont.0} and the commutative equality
$\mathrm{P}^q \partial_t^j = \partial_t^j \mathrm{P}^q$ for
$j \leq s-1$.

\end{proof}

\begin{lemma}\label{p.nabla.Bochner} 
	Let $n \geq 3$, $1 \leq q < n$, $2s + k > n$, and the form $F \in B^{k, 2(s-1), s-1}_{\mathrm{for}, q}$ satisfies $\mathrm{P}^q F = 0$ in $\mathbb{C}^n$.  
	Then there exists a unique form
	$p \in B^{k+1, 2(s-1), s-1}_{\mathrm{pre}, {q-1}}$
	such that (\ref{portog}) holds and
	\begin{equation}\label{eq.nabla.Bochner}
		\overline{\partial}^{q-1} p = F \text{ in } \mathbb{C}^n \times [0, T].
	\end{equation}
\end{lemma}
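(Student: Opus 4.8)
The plan is to construct $p$ explicitly from $F$ by inverting the generalized Laplacian, then to correct the result by a form with $z$-independent (but $t$-dependent) coefficients so that \eqref{portog} holds, and finally to read off the regularity from interior elliptic estimates. First I would set $p_0 := (\overline{\partial}^{q-1})^{*}\varphi^q F$, which is well defined (say via the Fourier transform in $z$), since the symbol of $(\overline{\partial}^{q-1})^{*}$ vanishes at the origin to first order; because the generalized Laplacians intertwine $\overline{\partial}^{q-1}$ and $(\overline{\partial}^{q-1})^{*}$, one also has $p_0 = \varphi^{q-1}(\overline{\partial}^{q-1})^{*}F$. From $\mathrm{P}^q F = 0$ and Lemma \ref{proector} (in the form $\mathrm{P}^q = I - \overline{\partial}^{q-1}(\overline{\partial}^{q-1})^{*}\varphi^q$) we get $\overline{\partial}^{q-1}p_0 = (I-\mathrm{P}^q)F = F$; moreover $(\overline{\partial}^{q-2})^{*}p_0 = \big((\overline{\partial}^{q-2})^{*}(\overline{\partial}^{q-1})^{*}\big)\varphi^q F = 0$ because $\overline{\partial}^{q-1}\circ\overline{\partial}^{q-2}=0$, and hence $\Delta^{q-1}p_0 = (\overline{\partial}^{q-1})^{*}\overline{\partial}^{q-1}p_0 = (\overline{\partial}^{q-1})^{*}F$.

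Since $\varphi^q$ and $\varphi^{q-1}$ act by convolution in $z$ only, they commute with $\partial_x^{\alpha}\partial_t^{j}$; combined with strong ellipticity of the constant-coefficient operator $\Delta^{q-1}$ and the definition of $B^{k,2(s-1),s-1}_{\mathrm{for},q}$, the interior elliptic estimate for $\Delta^{q-1}p_0 = (\overline{\partial}^{q-1})^{*}F$ shows that $p_0$ gains one Sobolev order over $F$ in each of the $C(I,\cdot)$- and $L^{2}(I,\cdot)$-norms occurring in the definition, so that $p_0$ has the $H_{\mathrm{loc}}$-regularity in $z$ and the $t$-regularity demanded by $B^{k+1,2(s-1),s-1}_{\mathrm{pre},q-1}$; for the $t$-derivatives one uses $\partial_t^{j}p_0 = (\overline{\partial}^{q-1})^{*}\varphi^q\partial_t^{j}F$ with $j\le s-1$ together with Lemma \ref{l.NS.cont.0}. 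The boundedness requirements \eqref{eq2}, \eqref{eq3}, active only when $2(s-1)+k\ge n+1$, I would verify using that the convolution kernel of $(\overline{\partial}^{q-1})^{*}\varphi^q$ is homogeneous of degree $1-2n$: this gives $\|p_0(\cdot,t)\|_{C_{b,\Lambda^{0,q-1}}}\lesssim \|F(\cdot,t)\|_{C_{b,\Lambda^{0,q-1}}}+\|F(\cdot,t)\|_{L^{2}_{q}}$, and since $2(s-1)+k\ge n+1>n$ the Sobolev embedding $H^{m}(\mathbb{R}^{2n})\hookrightarrow C_b$ for $m>n$ bounds the right-hand side by $\|F(\cdot,t)\|_{H^{2(s-1)+k}_{q}}$, with the parallel bound by $\|F(\cdot,t)\|_{H^{2(s-1)+k+1}_{q}}$; it is here that the dimensional hypothesis $n\ge3$ enters, to control $p_0$ near spatial infinity, exactly as in the pressure estimates of \cite{OMTNS21} and \cite[Theorem~1.4]{Polk23}. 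Integrating in $t$ then gives the $C(I,C_{b,\Lambda^{0,q-1}})$ and $L^{2}(I,C_{b,\Lambda^{0,q-1}})$ bounds required by \eqref{eq2}, \eqref{eq3}.

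Now I would put $p:=p_0-c$, where $c(z,t):=\sum_{\#J=q-1}c_J(t)\,d\overline{z}_J$ with $c_J(t):=\int_{\mathbb{R}^{2n}}(p_0)_{0J}(x,t)\,h_0(x)\,dx$; these coefficients are finite because $p_0(\cdot,t)\in H^{0}_{\mathrm{loc}}$ and $h_0$ has compact support, the identity $\partial_t^{j}c_J(t)=\int_{\mathbb{R}^{2n}}\partial_t^{j}(p_0)_{0J}(x,t)h_0(x)\,dx$ shows $c$ inherits the $t$-regularity of $p_0$, and $|c_J(t)|\le\|h_0\|_{L^{1}}\|p_0(\cdot,t)\|_{C_{b,\Lambda^{0,q-1}}}$ shows it inherits the boundedness. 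Since $c$ has $z$-independent coefficients, $\overline{\partial}^{q-1}c=0$ and $(\overline{\partial}^{q-2})^{*}c=0$, so $\overline{\partial}^{q-1}p=F$ and $(\overline{\partial}^{q-2})^{*}p=0$ still hold, while \eqref{portog.0} gives $\int_{\mathbb{R}^{2n}}p_{0J}(x,t)h_0(x)\,dx=c_J(t)-c_J(t)=0$, i.e.\ \eqref{portog}; thus $p\in B^{k+1,2(s-1),s-1}_{\mathrm{pre},q-1}$ and satisfies \eqref{eq.nabla.Bochner}. For uniqueness, the difference $r$ of two such solutions satisfies $\overline{\partial}^{q-1}r=0$ and $(\overline{\partial}^{q-2})^{*}r=0$, hence $\Delta^{q-1}r=0$; as $\Delta^{q-1}$ is a positive multiple of the componentwise Laplacian on $\mathbb{R}^{2n}$, each coefficient of $r(\cdot,t)$ is harmonic, and the growth restrictions built into $B^{k+1,2(s-1),s-1}_{\mathrm{pre},q-1}$ — in particular boundedness when $2(s-1)+k\ge n+1$ — force $r$ to have constant coefficients by Liouville's theorem, after which \eqref{portog} together with $\int_{\mathbb{R}^{2n}}h_0\,dx=1$ yields $r=0$. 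I expect the genuinely delicate step to be the global bound behind \eqref{eq2}–\eqref{eq3} (and, relatedly, the behaviour of $p_0$ at spatial infinity), since that is exactly where $n\ge3$ and $2s+k>n$ are used; the algebraic identities and the interior regularity are routine once Lemmas \ref{l.NS.cont.0} and \ref{proector} are in place.
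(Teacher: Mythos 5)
Your proposal is correct and follows essentially the same route as the paper: the same explicit candidate $p=(\overline{\partial}^{q-1})^{*}\varphi^{q}F$, the same use of $\mathrm{P}^{q}F=0$ via the identity $\mathrm{P}^{q}=I-\overline{\partial}^{q-1}(\overline{\partial}^{q-1})^{*}\varphi^{q}$ to get $\overline{\partial}^{q-1}p=F$ and $(\overline{\partial}^{q-2})^{*}p=0$, and the same Liouville-plus-normalization argument for uniqueness. The only difference is that you spell out details the paper leaves implicit — the explicit subtraction of the $t$-dependent constant form to enforce \eqref{portog} and the $C_{b}$/regularity estimates placing $p$ in $B^{k+1,2(s-1),s-1}_{\mathrm{pre},q-1}$ — which is a welcome elaboration rather than a departure.
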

\begin{proof}
	Let the conditions of the lemma be satisfied. We will show that the $(0, q-1)$-form

	\[
	p = (\overline{\partial}^{q-1})^* \varphi^q F
	\]

	is a solution to (\ref{eq.nabla.Bochner}). Indeed,

	\[
	\overline{\partial}^{q-1} p = \overline{\partial}^{q-1} (\overline{\partial}^{q-1})^* \varphi^q F,
	\]

	however, using Lemma \ref{proector} and the equality \eqref{eq22}, we see that for any function $v \in C_{0, \Lambda^{0, q}}^{\infty}$, it holds that

	\[
	\langle p, (\overline{\partial}^{q-1})^* v\rangle_q = \langle (\overline{\partial}^{q-1})^* \varphi^q F, (\overline{\partial}^{q-1})^* v\rangle_q = \langle F, \varphi^q \overline{\partial}^{q-1} (\overline{\partial}^{q-1})^* v\rangle_q =
	\]

	\[
	= \langle F, v\rangle_q - \langle F, \varphi^q (\overline{\partial}^{q})^* \overline{\partial}^{q} v\rangle_q = 
	\langle F, v\rangle_q,
	\]

	since $\mathrm{P}^q F = 0$. By construction of the solution, we have $\overline{\partial}^{q-1} p = F$ and $(\overline{\partial}^{q-2})^* p = 0$.

	Let now $p_1, p_2 \in B^{k+1, 2(s-1), s-1}_{\mathrm{pre}, q-1}$ be two solutions of equation (\ref{eq.nabla.Bochner}). Then $p = p_1 - p_2$ is also a solution, and $\overline{\partial}^{q-1} p = 0$. Since $p \in B^{k+1, 2(s-1), s-1}_{\mathrm{pre}, q-1}$, we have
	$(\overline{\partial}^{q-2})^* p = 0$, which means that $p$ actually has harmonic coefficients in $\mathbb{C}^n$ bounded at infinity. Then, by Liouville's theorem, $p$ is a constant vector, i.e., the "pressure" is determined up to a constant, as in the case of the classical Stokes equations.
	To ensure the uniqueness of the vector $p$, we use the fact that it, by the definition of the space $B^{k+1, 2(s-1), s-1}_{\mathrm{pre}, q-1}$, satisfies \eqref{portog}. Therefore, taking into account \eqref{portog.0},
	we conclude that $p \equiv 0$.
\end{proof}

To obtain the open mapping theorem, we need to consider the linearization of the problem \eqref{zad}.
Namely, given $(0, q)$-forms $f$ and $w$ with sufficiently smooth coefficients in
$\mathbb{C}^n \times [0, T]$ and a $(0, q)$-form 
$u_0$ in $\mathbb{C}^n$, it is required to find sufficiently smooth $(0, q)$-form 
$u$ and $(0, q-1)$-form $p$
in the strip $\mathbb{C}^n \times [0, T]$, satisfying
\begin{equation}\label{zadlin}
	\left\{\begin{array}{rcl}
		\partial_{t} u + \mu \Delta^{q} u + \mathbf{B}_q(w, u) + \overline{\partial}^{q-1} p &=& f \text{ in } \mathbb{C}^{n} \times (0, T), \\
		\left(\overline{\partial}^{q-1}\right)^{*} u &=& 0 \text{ in } \mathbb{C}^{n} \times (0, T), \\
		\left(\overline{\partial}^{q-2}\right)^{*} p &=& 0 \text{ in } \mathbb{C}^{n} \times (0, T), \\
		u(z, 0) &=& u_{0}(z), \, z \in \mathbb{C}^{n}, \\
		\sup\limits_{t \in [0, T]} \int\limits_{\mathbb{C}^{n}} |u(z, t)|^{2} \, dx + \int\limits_{0}^{T} \int\limits_{\mathbb{C}^{n}} \sum\limits_{j=1}^{2n} \left|\partial_{j} u(z, t)\right|^{2} \, dx \, dt &<& +\infty.
	\end{array}\right.
\end{equation}

\begin{thm}
	\label{t.exist.NS.lin.strong}
	Let $n \geq 2$, $0 \leq q < n$, 
	$s \in \mathbb{N}$, $k \in \mathbb{Z}_+$, $2s + k > n$, 
	and
	$w \in B^{k, 2s, s}_{\mathrm{vel}, q}$. 
	Then the problem \eqref{zadlin} induces a bijective continuous linear mapping
	\begin{equation}
		\label{eq.map.Aw}
		\mathcal{A}^{q}_w :
		B^{k, 2s, s}_{\mathrm{vel}, q}  \times 
		B^{k+1, 2(s-1), s-1}_{\mathrm{pre}, {q-1}} 
		\to B^{k, 2(s-1), s-1}_{\mathrm{for}, q}  \times 
		\mathbf{H}^{2s+k}_{q},
	\end{equation}
	with a continuous inverse operator $(\mathcal{A}^{q}_w)^{-1}$.
\end{thm}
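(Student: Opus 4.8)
The plan is to prove that $\mathcal{A}^q_w$ is a bounded linear bijection and then to invoke Banach's open mapping theorem, which gives the continuity of $(\mathcal{A}^q_w)^{-1}$ for free. Boundedness of $\mathcal{A}^q_w$ is immediate from Lemma \ref{l.NS.cont.0}: the operators $\partial_t$, $\Delta^q$ and $\overline{\partial}^{q-1}$ map the indicated spaces continuously into $B^{k,2(s-1),s-1}_{\mathrm{for},q}$, while $\mathbf{B}_q(w,\cdot)$ does too once one observes the elementary embedding $B^{k,2s,s}_{\mathrm{vel},q}\hookrightarrow B^{k+2,2(s-1),s-1}_{\mathrm{vel},q}$ (their top spaces coincide and the derivative conditions of the latter are weaker), so that \eqref{eq.B.pos.bound} applies with $w$ placed in the latter space; finally $u\mapsto u(\cdot,0)$ is continuous from $B^{k,2s,s}_{\mathrm{vel},q}\subset C(I,\mathbf{H}^{k+2s}_q)$ into $\mathbf{H}^{2s+k}_q$. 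Since all three spaces are Banach, it remains to prove that $\mathcal{A}^q_w$ is one-to-one and onto.

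The key structural step is to split off the pressure using the Leray-type projector $\mathrm{P}^q$ of Lemma \ref{proector}. Recall that $\mathrm{P}^q=I-\overline{\partial}^{q-1}(\overline{\partial}^{q-1})^{*}\varphi^q$ acts as the identity on forms annihilated by $(\overline{\partial}^{q-1})^{*}$, commutes with $\partial_t^{j}$ and with $\Delta^q$, and kills $\overline{\partial}^{q-1}p$ (because $\overline{\partial}^{q-1}$ commutes with the relevant fundamental solutions and $\overline{\partial}^{q}\overline{\partial}^{q-1}=0$). Hence, applying $\mathrm{P}^q$ to the first line of \eqref{zadlin}, a pair $(u,p)\in B^{k,2s,s}_{\mathrm{vel},q}\times B^{k+1,2(s-1),s-1}_{\mathrm{pre},q-1}$ solves \eqref{zadlin} with data $(f,u_0)$ if and only if $u$ solves the reduced divergence-free parabolic problem
\begin{equation*}
	\partial_t u + \mu\Delta^q u + \mathrm{P}^q\mathbf{B}_q(w,u) = \mathrm{P}^q f \ \text{ in }\ \mathbb{C}^n\times(0,T),\qquad u(\cdot,0)=u_0,
\end{equation*}
and, for $q\ge 1$, $p$ is recovered from $F:=f-\partial_t u-\mu\Delta^q u-\mathbf{B}_q(w,u)\in B^{k,2(s-1),s-1}_{\mathrm{for},q}$ by Lemma \ref{p.nabla.Bochner}: indeed the reduced equation says precisely that $\mathrm{P}^q F=0$, so there is a unique $p\in B^{k+1,2(s-1),s-1}_{\mathrm{pre},q-1}$ with \eqref{portog} and $\overline{\partial}^{q-1}p=F$. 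For $q=0$ the pressure space is trivial and $\mathcal{A}^q_w$ is just the reduced parabolic map. Thus bijectivity of $\mathcal{A}^q_w$ reduces to unique solvability of the reduced problem in $B^{k,2s,s}_{\mathrm{vel},q}$ for arbitrary $\mathrm{P}^q f$ and $u_0\in\mathbf{H}^{2s+k}_q$.

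To solve the reduced problem I would first dispose of the case $w=0$: $\partial_t+\mu\Delta^q$ is strongly uniformly parabolic in the sense of Petrovskii, and its heat-type potentials yield, exactly as for the de Rham complex in \cite{OMTNS21}, an isomorphism of $B^{k,2s,s}_{\mathrm{vel},q}$ onto $\mathrm{P}^q\big(B^{k,2(s-1),s-1}_{\mathrm{for},q}\big)\times\mathbf{H}^{2s+k}_q$. The operator $\mathrm{P}^q\mathbf{B}_q(w,\cdot)$ is then a lower-order perturbation whose coefficients enjoy only the finite smoothness furnished by $w\in B^{k,2s,s}_{\mathrm{vel},q}$, and I would incorporate it either by the Galerkin method with parabolic energy estimates or, equivalently, by the method of continuity along $\tau\mapsto\mathcal{A}^q_{\tau w}$, $\tau\in[0,1]$. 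Either way the essential ingredient is the a priori bound $\|u\|_{B^{k,2s,s}_{\mathrm{vel},q}}\le C\big(\|\mathrm{P}^q f\|_{B^{k,2(s-1),s-1}_{\mathrm{for},q}}+\|u_0\|_{\mathbf{H}^{2s+k}_q}\big)$ with $C$ independent of $\tau$, obtained by testing the equation and its space-time derivatives $\partial_x^{\alpha}\partial_t^{j}$ with $|\alpha|+2j\le 2s$ against $u$, using $(\overline{\partial}^{q-1})^{*}u=0$ to annihilate the pressure term and to reduce $(\Delta^q u,u)_{L^2_q}$ to $\|\overline{\partial}^q u\|^2_{L^2_{q+1}}$, estimating $\mathbf{B}_q(w,u)$ via \eqref{eq.B.pos.bound} and the Gagliardo--Nirenberg inequalities, and absorbing the outcome into the parabolic part after a Grönwall argument. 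Injectivity of $\mathcal{A}^q_w$ is the special case $f=0$, $u_0=0$ of this estimate, which forces $u\equiv0$, hence $F\equiv0$, hence $p\equiv0$ by the uniqueness in Lemma \ref{p.nabla.Bochner} (together with the normalization \eqref{portog}--\eqref{portog.0}).

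The main obstacle I expect is the parabolic regularity bootstrap in the anisotropic Bochner--Sobolev scale: one must control all mixed derivatives $\partial_x^{\alpha}\partial_t^{j}u$ with $|\alpha|+2j\le 2s$ simultaneously in $C(I,\mathbf{H}^{\cdot}_q)$ and $L^2(I,\mathbf{H}^{\cdot}_q)$, while the perturbation $\mathrm{P}^q\mathbf{B}_q(w,\cdot)$ carries only the finite smoothness of $w$; differentiating the equation and keeping the constants uniform along the continuation parameter is the technical heart. A secondary point is that Lemma \ref{p.nabla.Bochner} as stated requires $n\ge3$ and $1\le q<n$, so for $q=0$ (no pressure) and, when $q\ge1$, for the remaining values of $n$ and of $2s+k$ one must argue the pressure directly as in the proof of Lemma \ref{p.nabla.Bochner}: the equations $\overline{\partial}^{q-1}p=F$ and $(\overline{\partial}^{q-2})^{*}p=0$ together with $\mathrm{P}^qF=0$ single out $p=(\overline{\partial}^{q-1})^{*}\varphi^q F$ up to a form with harmonic coefficients bounded at infinity, which by Liouville's theorem is constant and is pinned down to $0$ by \eqref{portog}.
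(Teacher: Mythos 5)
Your proposal is correct and follows essentially the same route as the paper: apply the Leray-type projector $\mathrm{P}^q$ to reduce \eqref{zadlin} to a divergence-free parabolic problem solved by the Galerkin method with energy estimates, and reconstruct the pressure via Lemma \ref{p.nabla.Bochner}. The paper's proof is a one-line appeal to exactly these ingredients (citing \cite{OMTNS21} for the Galerkin step), so your writeup is simply a more detailed version of the same argument, including the worthwhile observation that the hypotheses of Lemma \ref{p.nabla.Bochner} do not literally cover all cases of the theorem and must be supplemented as in its proof.
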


\begin{proof}
The statement follows from Lemma \ref{p.nabla.Bochner}, which allows "reconstruction" of the pressure $p$ after applying the projection operator ${P}^q$ to the problem \eqref{zadlin}, using the standard Galerkin method (see, for example, \cite{OMTNS21}).
\end{proof}

Since $B^{k, 2s, s}_{\mathrm{vel}, q}$ is continuously embedded in the space, the uniqueness theorem also holds for the nonlinear case on the scales of the spaces we have introduced.

However, in recent years, the scientific community's efforts have also been directed towards seeking proof of the absence of an existence theorem for Navier-Stokes type equations in high spatial dimensions; see, for example, \cite{B24, Lad03, PlSv03, Tao16, Temam79}. Nevertheless, we obtain the open mapping theorem or, in other words, the stability theorem, for the problem \eqref{zad} on the introduced scale of Bochner-Sobolev spaces.

\begin{thm}\label{theorOpen}
	Let $n \geq 2$, $1 \leq q < n$, $s \in \mathbb{N}$, and $k \in \mathbb{Z}_+$, $2s + k > n$. Then \eqref{zad} induces an injective continuous nonlinear open mapping
\begin{equation}\label{eq.map.A}
	\mathcal{A}^{q}: B_{\mathrm{vel}, q}^{k, 2 s, s} \times B_{\mathrm{pre}, q-1}^{k+1,2(s-1), s-1} \rightarrow B_{\mathrm{for}, q}^{k, 2(s-1), s-1} \times \mathbf{H}_{q}^{2 s+k}.
\end{equation}
\end{thm}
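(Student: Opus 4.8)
The plan is to factor the nonlinear map $\mathcal{A}^{q}$ through the linearized map $\mathcal{A}^{q}_{w}$ of Theorem~\ref{t.exist.NS.lin.strong} and apply an inverse/implicit-function-type argument. Explicitly, write $\mathcal{A}^{q}(u, p) = (\partial_{t} u + \mu\Delta^{q} u + \mathcal{N}^{q} u + \overline{\partial}^{q-1} p,\ u(\cdot, 0))$. The first observation is injectivity: this is exactly the uniqueness statement already flagged after Theorem~\ref{t.exist.NS.lin.strong}, and it follows because if $\mathcal{A}^{q}(u_{1}, p_{1}) = \mathcal{A}^{q}(u_{2}, p_{2})$ then, setting $u = u_{1} - u_{2}$, $p = p_{1} - p_{2}$, the pair $(u, p)$ solves the linear problem \eqref{zadlin} with $w$ replaced by the appropriate averaged argument and with zero data, together with an energy estimate on the difference coming from condition \eqref{key1} and the bilinear bound \eqref{eq.B.pos.bound}; hence $u \equiv 0$, and then Lemma~\ref{p.nabla.Bochner} forces $p \equiv 0$. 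Continuity of $\mathcal{A}^{q}$ is immediate from Lemma~\ref{l.NS.cont.0} (each summand $\partial_{t}$, $\Delta^{q}$, $\mathcal{N}^{q}$, $\overline{\partial}^{q-1}$ maps continuously into $B^{k, 2(s-1), s-1}_{\mathrm{for}, q}$, and the trace $u \mapsto u(\cdot, 0)$ into $\mathbf{H}^{2s+k}_{q}$ is continuous by the embedding $B^{k, 2s, s}_{\mathrm{vel}, q} \hookrightarrow C(I, \mathbf{H}^{2s+k}_{q})$).

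The substance is openness. I would proceed pointwise: fix $(u_{\ast}, p_{\ast})$ in the domain, put $w = u_{\ast}$, and compare $\mathcal{A}^{q}$ with the linear operator $\mathcal{A}^{q}_{w}$. For a nearby pair $(u, p)$ write $\mathcal{A}^{q}(u, p) = \mathcal{A}^{q}_{w}(u, p) + \mathcal{R}(u)$ where the remainder is $\mathcal{R}(u) = \mathcal{N}^{q} u - \tfrac{1}{2}\mathbf{B}_{q}(w, u) - (\text{terms linear in } w)$; using the explicit form $\mathcal{N}^{q} u = M_{1}^{q}(\overline{\partial}^{q} u, u) + \overline{\partial}^{q-1} M_{2}^{q}(u, u)$ and the definition of $\mathbf{B}_{q}$, one checks that $\mathcal{R}(u) - \mathcal{R}(u_{\ast})$ is a quadratic expression in $u - u_{\ast}$ of the form $\tfrac{1}{2}\mathbf{B}_{q}(u - u_{\ast}, u - u_{\ast})$ up to harmless lower-order rearrangement. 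Then \eqref{eq.B.pos.bound} gives the key Lipschitz-with-small-constant estimate
\[
\|\mathcal{A}^{q}(u, p) - \mathcal{A}^{q}(u_{\ast}, p_{\ast}) - \mathcal{A}^{q}_{w}(u - u_{\ast}, p - p_{\ast})\|_{B^{k, 2(s-1), s-1}_{\mathrm{for}, q} \times \mathbf{H}^{2s+k}_{q}} \leq C\,\|u - u_{\ast}\|_{B^{k, 2s, s}_{\mathrm{vel}, q}}^{2},
\]
because the pure-$w$ and $w$-linear terms cancel against those inside $\mathcal{A}^{q}_{w}$. Now $\mathcal{A}^{q}_{w}$ is boundedly invertible by Theorem~\ref{t.exist.NS.lin.strong} (note $w = u_{\ast} \in B^{k, 2s, s}_{\mathrm{vel}, q}$ as required), so on a small ball around $(u_{\ast}, p_{\ast})$ the map $(\mathcal{A}^{q}_{w})^{-1} \circ \mathcal{A}^{q}$ is a small perturbation of the identity; a standard Banach fixed-point / Newton-iteration argument then shows $\mathcal{A}^{q}$ maps that ball onto a neighborhood of $\mathcal{A}^{q}(u_{\ast}, p_{\ast})$, which is exactly openness.

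The main obstacle is the quadratic-loss bookkeeping in the spaces: to apply \eqref{eq.B.pos.bound} one needs the bilinear estimate on the index $k+2$ regularity level (that is why the lemma is stated for $w \in B^{k+2, 2(s-1), s-1}_{\mathrm{vel}, q}$), so one must either absorb the two-derivative gap by the hypothesis $2s + k > n$ through the Gagliardo--Nirenberg inequalities of Lemma~\ref{l.NS.cont.0}, or iterate within a slightly higher-regularity ball and then descend; care is needed that the fixed-point iteration stays inside the domain of $\mathcal{A}^{q}$ and that the pressure component is genuinely recovered at each step via Lemma~\ref{p.nabla.Bochner} (its hypotheses $n \geq 3$, $1 \leq q < n$, $2s + k > n$ match those of the theorem). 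A secondary technical point is the trace term: one must verify that the initial-condition map loses no regularity beyond what $B^{k, 2s, s}_{\mathrm{vel}, q} \to \mathbf{H}^{2s+k}_{q}$ already provides, which is where the compatibility between the parabolic scaling $|\alpha| + 2j \leq 2s$ in the definition of the velocity space and the target Sobolev index $2s + k$ is used.
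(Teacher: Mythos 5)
Your proposal is correct and follows essentially the same route as the paper: continuity from Lemma~\ref{l.NS.cont.0}, injectivity via the energy identity for the difference of two solutions together with G\r{a}rding's inequality, Gr\"onwall's lemma and Lemma~\ref{p.nabla.Bochner}, and openness by identifying the Fr\'echet derivative at $(u_{*},p_{*})$ with the invertible linear operator $\mathcal{A}^{q}_{u_{*}}$ of Theorem~\ref{t.exist.NS.lin.strong} and invoking the inverse/implicit function theorem in Banach spaces (you merely unpack the fixed-point argument that the paper delegates to \cite[Theorem 5.2.3]{Ham82}). The ``quadratic-loss'' worry you raise is resolved by the continuous embedding $B^{k,2s,s}_{\mathrm{vel},q}\hookrightarrow B^{k+2,2(s-1),s-1}_{\mathrm{vel},q}$, which the paper uses implicitly.
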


\begin{proof}
	The continuity of the operator $\mathcal{A}^{q}$ follows from Lemma \ref{l.NS.cont.0}.
	Next, let 
	$$
	\begin{array}{rcl}
		(u, p)
		& \in
		& B^{k, 2s, s}_{\mathrm{vel}, q} \times 
		B^{k+1, 2(s-1), s-1}_{\mathrm{pre}, {q-1}},
		\\
		\mathcal{A}^{q} (u, p)
		\, = \, (f, u_0)
		& \in
		& B^{k, 2(s-1), s-1}_{\mathrm{for}, q} \times 
		\mathbf{H}^{2s+k}_{q}.
	\end{array}
	$$

	Thus, $u$ is a weak solution of the problem \eqref{zad}, i.e., it satisfies \eqref{zadsl}. We will show that the problem \eqref{zad} has at most one solution $(u, p)$ in the space $B^{k, 2s, s}_{\mathrm{vel}, q} \times 
	B^{k+1, 2(s-1), s-1}_{\mathrm{pre}, q-1}$. Indeed, let $(u', p')$ and $(u'', p'')$ be two solutions of the problem \eqref{zad} in the given functional spaces, i.e., $\mathcal{A}^{q} (u', p') = \mathcal{A}^{q} (u'', p'')$. Then the forms $u = u' - u''$ and $p = p' - p''$ satisfy \eqref{zad} with zero data $(f, u_0) = (0, 0)$, hence
	\begin{equation*}
		\frac{d}{dt} \|u\|^2_{{L}_{q}^2}
		+ 2\mu \|\overline{\partial}^{q} u\|^2_{{L}_{q+1}^2} = \Big(\left( \mathbf{B}_q (u'', u'') - \mathbf{B}_q (u', u')\right), u\Big)_{{L}_{q}^2}.
	\end{equation*}
	From G?rding's inequality and Gr?nwall's lemma (see, for example, \cite{MPF91}), it follows that $u \equiv 0$, and from Lemma \ref{p.nabla.Bochner} we have $p' = p''$. Thus, we have proved the injectivity of the operator $\mathcal{A}^{q}$.

	Finally, it is easy to see that the Fr?chet derivative
	$(\mathcal{A}^{q}_{(w, p_0)})'$
	of the nonlinear mapping $\mathcal{A}^{q}$ at an arbitrary point
	$$
	(w, p_0)
	\in B^{k, 2s, s}_{\mathrm{vel}, q} \times 
	B^{k+1, 2(s-1), s-1}_{\mathrm{pre}, {q-1}},
	$$
	is equal to the continuous linear mapping $\mathcal{A}^{q}_w$. By Theorem \ref{t.exist.NS.lin.strong}, the operator

	\[
	\mathcal{A}^{q}_w:
	B^{k, 2s, s}_{\mathrm{vel}, q} \times 
	B^{k+1, 2(s-1), s-1}_{\mathrm{pre}, {q-1}} \to
	B^{k, 2(s-1), s-1}_{\mathrm{for}, q} \times 
	\mathbf{H}^{2s+k}_{q}
	\]

	is continuously invertible. Thus, the openness of the image of the mapping $\mathcal{A}^{q}$ and the continuity of its local inverse mapping follow from the implicit function theorem for Banach spaces (see, for example, \cite[Theorem 5.2.3]{Ham82}).
	
\end{proof}

In particular, the theorem means that for any pair of data for which a solution in the desired class exists, there is a neighborhood for all elements of which corresponding solutions also exist. Note that for the classical Navier-Stokes equations in other functional spaces, a similar statement was noted in the book by O. A. Ladyzhenskaya \cite{Lad61}. For some other elliptic complexes, similar theorems in various functional spaces were obtained in \cite{Polk23}, \cite{OMTNS21}.

Furthermore, from Theorem \ref{theorOpen}, it follows that the image of the operator \eqref{eq.map.A} is closed if and only if it coincides with the entire space $ B_{\mathrm{for}, q}^{k, 2(s-1), s-1} \times \mathbf{H}_{q}^{2s+k}$.

Recently, a surjectivity criterion for the image in spaces similar to those introduced by us for the Navier-Stokes equations was obtained, see \cite{VUUM22}, inspired by considerations from \cite{Sm65}. For our problem, we can obtain a similar criterion.

\begin{thm}
	\label{t.LsLr}
	Let $s \in \mathbb{N}$,
	$k \in \mathbb{Z}_+$,
	and the numbers $\mathfrak{r}$, $\mathfrak{s}$ satisfy 
	$2/\mathfrak{s} + 2n/\mathfrak{r} = 1$.
	Then the mapping \eqref{eq.map.A} is surjective if and only if the precompactness of the image $\mathcal{A}^{q}(S)$ in the space $B^{k, 2(s-1), s-1}_{\mathrm{for}, q} \times \mathbf{H}^{2s+k}_{q}$ of any subset $S = S_{\mathrm{vel}, q} \times S_{\mathrm{pre}, q-1}$ of the Cartesian product 
	$B^{k, 2s, s}_{\mathrm{vel}, q} \times B^{k+1, 2(s-1), s-1}_{\mathrm{pre}, q-1}$ implies the boundedness of the set $S_{\mathrm{vel}, q}$ in the space $L^{\mathfrak{s}}(I, L_{q}^{\mathfrak{r}})$.
\end{thm}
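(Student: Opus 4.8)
The plan is to reduce the surjectivity of $\mathcal{A}^{q}$ to a properness property and then to match that property with the Bochner-space bound in the statement. First I would fix the topological skeleton. By Theorem \ref{theorOpen} the mapping $\mathcal{A}^{q}$ is a continuous, injective and open map of the Banach space $X := B_{\mathrm{vel}, q}^{k, 2 s, s} \times B_{\mathrm{pre}, q-1}^{k+1,2(s-1), s-1}$ into the Banach space $Y := B_{\mathrm{for}, q}^{k, 2(s-1), s-1} \times \mathbf{H}_{q}^{2 s+k}$; hence it is a homeomorphism onto its open image $U := \mathcal{A}^{q}(X)$. As already noted after Theorem \ref{theorOpen}, $U$ is closed if and only if $U = Y$. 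Now $\mathcal{A}^{q}$ is surjective if and only if it is proper: a continuous injective proper map into a metric space has closed image, hence $U = Y$; conversely, if $U = Y$ then $\mathcal{A}^{q}$ is a bijective continuous open map, i.e. a homeomorphism, hence proper. Finally, properness of $\mathcal{A}^{q}$ is equivalent to the statement that \emph{for every $S \subseteq X$ with $\mathcal{A}^{q}(S)$ precompact in $Y$, the set $S$ is precompact in $X$}. So it suffices to show that this last property is equivalent to the criterion of the theorem.

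For the implication ``surjective $\Rightarrow$ criterion'' I would invoke a Sobolev embedding. The relation $2/\mathfrak{s} + 2n/\mathfrak{r} = 1$ gives $\mathfrak{r} > 2n$, so, using $2s+k > n$, the Gagliardo--Nirenberg inequality on $\mathbb{R}^{2n}$ yields $\mathbf{H}_{q}^{k+2s} \hookrightarrow L_{q}^{\mathfrak{r}}$ continuously; since $|I| < \infty$ this gives a continuous inclusion
\[
B_{\mathrm{vel}, q}^{k, 2 s, s} \hookrightarrow C(I, \mathbf{H}_{q}^{k+2s}) \hookrightarrow C(I, L_{q}^{\mathfrak{r}}) \hookrightarrow L^{\mathfrak{s}}(I, L_{q}^{\mathfrak{r}}).
\]
If $\mathcal{A}^{q}$ is surjective it is a homeomorphism of $X$ onto $Y$, so precompactness of $\mathcal{A}^{q}(S)$ forces $S$, and hence its velocity projection $S_{\mathrm{vel}, q}$, to be precompact in $B_{\mathrm{vel}, q}^{k, 2 s, s}$, therefore bounded there, therefore bounded in $L^{\mathfrak{s}}(I, L_{q}^{\mathfrak{r}})$ by the embedding above.

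For the converse I would assume the criterion and prove that $U$ is closed. Let $(u_{n}, p_{n}) \in X$ with $\mathcal{A}^{q}(u_{n}, p_{n}) = (f_{n}, u_{0, n}) \to (f_{*}, u_{0, *})$ in $Y$. Applying the Leray-type projection $\mathrm{P}^{q}$ of Lemma \ref{proector} to the first line of \eqref{zad} and using $(\overline{\partial}^{q-1})^{*} u_{n} = 0$ (so that $\mathrm{P}^{q} u_{n} = u_{n}$, $\mathrm{P}^{q}(\partial_{t} + \mu \Delta^{q}) u_{n} = (\partial_{t} + \mu \Delta^{q}) u_{n}$, while $\mathrm{P}^{q} \overline{\partial}^{q-1} = 0$), each $u_{n}$ solves the reduced system $\partial_{t} u_{n} + \mu \Delta^{q} u_{n} + \mathrm{P}^{q} \mathcal{N}^{q} u_{n} = \mathrm{P}^{q} f_{n}$, $u_{n}(\cdot, 0) = u_{0, n}$, and moreover $\overline{\partial}^{q-1} p_{n} = (I - \mathrm{P}^{q})(f_{n} - \mathcal{N}^{q} u_{n})$, so by Lemma \ref{p.nabla.Bochner} the pressure $p_{n}$ is recovered continuously from $u_{n}$ and $f_{n}$. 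Applying the criterion to the product set $S = S_{\mathrm{vel}, q} \times S_{\mathrm{pre}, q-1}$ with $S_{\mathrm{vel}, q} = \{u_{n}\}$ and $S_{\mathrm{pre}, q-1} = \{p_{n}\}$ — whose image $\mathcal{A}^{q}(S)$ is precompact precisely when the quadratic term $\{\mathcal{N}^{q} u_{n}\}$ is, given that $\{f_{n}\}$ and $\{u_{0, n}\}$ already are — yields a uniform bound $\sup_{n}\|u_{n}\|_{L^{\mathfrak{s}}(I, L_{q}^{\mathfrak{r}})} < \infty$. The strong-solution a priori estimates behind Theorem \ref{crstrong}, together with the construction of the scale $B_{\mathrm{vel}, q}^{k, 2 s, s}$, then promote this to $\sup_{n}\|u_{n}\|_{B_{\mathrm{vel}, q}^{k, 2 s, s}} < \infty$, and hence $\sup_{n}\|p_{n}\|_{B_{\mathrm{pre}, q-1}^{k+1, 2(s-1), s-1}} < \infty$ by Lemma \ref{p.nabla.Bochner}. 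Extracting a subsequence that converges weakly-$*$ in $B_{\mathrm{vel}, q}^{k, 2 s, s}$ and strongly in local spaces over $I \times \mathbb{C}^{n}$ (Aubin--Lions on an exhaustion of $\mathbb{C}^{n}$ by balls, plus a diagonal argument, so as to pass to the limit in $\mathrm{P}^{q} \mathcal{N}^{q} u_{n}$) produces a limit $u_{*} \in B_{\mathrm{vel}, q}^{k, 2 s, s}$ solving the reduced system with data $(\mathrm{P}^{q} f_{*}, u_{0, *})$; reconstructing $p_{*}$ via Lemma \ref{p.nabla.Bochner} gives $(f_{*}, u_{0, *}) = \mathcal{A}^{q}(u_{*}, p_{*}) \in U$, so $U$ is closed. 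The injectivity of $\mathcal{A}^{q}$ and continuity of $(\mathcal{A}^{q})^{-1}$ on $U$ (Theorem \ref{theorOpen}) finally upgrade the weak convergence to strong convergence $(u_{n}, p_{n}) \to (u_{*}, p_{*})$ in $X$, closing the loop with properness.

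I expect the genuinely hard points to be, as in the classical theory and in \cite{VUUM22}: first, the quantitative strong-solution estimate — converting a uniform bound in the critical Bochner class $L^{\mathfrak{s}}(I, L_{q}^{\mathfrak{r}})$ with $2/\mathfrak{s} + 2n/\mathfrak{r} = 1$ into uniform control over the whole scale $B_{\mathrm{vel}, q}^{k, 2 s, s}$, the analogue of the Ladyzhenskaya--Prodi--Serrin bootstrap, which rests on Gagliardo--Nirenberg inequalities on $\mathbb{R}^{2n}$ for $\mathcal{N}^{q} u = M_{1}^{q}(\overline{\partial}^{q} u, u) + \overline{\partial}^{q-1} M_{2}^{q}(u, u)$ with condition \eqref{key1} closing the energy identity; second, making the precompactness hypothesis of the criterion actually bear on a sequence with convergent $\mathcal{A}^{q}$-images, i.e. controlling the quadratic term $\{\mathcal{N}^{q} u_{n}\}$ through the structure of the projector $\mathrm{P}^{q}$ and Lemma \ref{p.nabla.Bochner}; and third, identifying the limit equation for the quadratic term on the unbounded $\mathbb{C}^{n} = \mathbb{R}^{2n}$, where the absence of a global compact Sobolev embedding forces the passage to the limit to be carried out locally in space.
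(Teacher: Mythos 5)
Your proposal is correct and follows essentially the same route as the paper: the forward direction rests on the continuous embedding $B_{\mathrm{vel}, q}^{k, 2s, s} \hookrightarrow L^{\mathfrak{s}}(I, L_{q}^{\mathfrak{r}})$ together with Theorem \ref{theorOpen}, and the converse establishes closedness of the image by combining the hypothesis with Ladyzhenskaya--Prodi--Serrin-type a priori estimates (the paper's Lemmas \ref{p.En.Est.u.strong} and \ref{c.En.Est.g.ks}), weak compactness, and pressure reconstruction via Lemma \ref{p.nabla.Bochner}. One small remark: your parenthetical tying precompactness of $\mathcal{A}^{q}(S)$ to that of $\{\mathcal{N}^{q}u_{n}\}$ is unnecessary --- $\mathcal{A}^{q}(S)=\{(f_{n},u_{0,n})\}$ is precompact simply because it is a convergent sequence --- but this does not affect the argument.
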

\begin{proof}
	The proof is similar to the case when the Navier-Stokes equation associated with the de Rham complex is considered, see \cite[Theorem 3]{VUUM22}. We present the main steps of the proof. Let the mapping \eqref{eq.map.A} be surjective. 
	Then the image of this mapping is closed by Theorem \ref{theorOpen}. Fix a 
	subset $S = S_{\mathrm{vel}, q} \times S_{\mathrm{pre}, q-1}$ of the product
	$B^{k, 2s, s}_{\mathrm{vel}, q} \times B^{k+1, 2(s-1), s-1}_{\mathrm{pre}, q-1}$ 
	such that the image $\mathcal{A}^{q}(S)$ is precompact in the space
	$B^{k, 2(s-1), s-1}_{\mathrm{for}, q} \times \mathbf{H}^{2s+k}_{q}$. 
	If the set $S_{\mathrm{vel}, q}$ is unbounded in the space
	$L^{\mathfrak{s}}(I, {L}^{\mathfrak{r}}_{q})$, then 
	there exists a sequence $\{ (u_k, p_k) \} \subset S$ such that
	\begin{equation}
		\label{eq.unbounded}
		\lim_{k \to \infty} \| u_k \|_{L^{\mathfrak{s}}(I, {L}^{\mathfrak{r}}_{q})}
		= \infty.
	\end{equation}
	Since the set $\mathcal{A}^{q}(S)$ is precompact in
	$B^{k, 2(s-1), s-1}_{\mathrm{for}, q} \times \mathbf{H}^{2s+k}_{q}$,
	we conclude that the corresponding sequence of data
	$\{ \mathcal{A}^{q} (u_k, p_k) = (f_k, u_{k,0}) \}$
	contains a subsequence $\{ (f_{k_m}, u_{k_m, 0}) \}$ that converges to an element
	$(f, u_0)$ in this space. But the image of the operator $\mathcal{A}^{q}$ is closed, which means that 
	for the data $(f, u_0)$ there exists a unique solution $(u, p)$ for 
	\eqref{zad} 
	in the space
	$B^{k, 2s, s}_{\mathrm{vel}, q} \times B^{k+1, 2(s-1), s-1}_{\mathrm{pre}, q-1}$,
	and the sequence $\{ (u_{k_m}, p_{k_m}) \}$ converges to $(u, p)$ in this space.
	Therefore, $\{ (u_{k_m}, p_{k_m}) \}$ is bounded in
	$B^{k, 2s, s}_{\mathrm{vel}, q} \times B^{k+1, 2(s-1), s-1}_{\mathrm{pre}, q-1}$,
	and this contradicts \eqref{eq.unbounded}, since the space $B^{k, 2s, s}_{\mathrm{vel}, q}$
	is continuously embedded in the space $L^{\mathfrak{s}}(I, {L}^{\mathfrak{r}}_{q})$ for any pair $\mathfrak{r}$, $\mathfrak{s}$ satisfying the conditions 
	$2/\mathfrak{s} + 2n/\mathfrak{r} = 1$.
	
	Next, let us consider the standard a priori estimates for the equations \eqref{zad}. 
	Let the elements of the considered spaces possess sufficient regularity; the estimates are needed to prove the 
	surjectivity of the mapping \eqref{eq.map.A}, rather than to improve 
	the regularity of weak solutions.
	
	\begin{lemma}
		\label{p.En.Est.u.strong}
		If
		$(u, p) \in B^{0, 2, 1}_{{\mathrm{vel}, q}} \times B^{1, 0, 0}_{{\mathrm{pre}, q-1}}$
		is a solution of the equations \eqref{zad} with the data
		$(f, u_0) \in B^{0, 0, 0}_{{\mathrm{for}, q}} \times \mathbf{H}^{2}_{q}$,
		then
		$ \| u \|_{0, \mu, T}
		\leq
		\| (f, u_0) \|_{0, \mu, T}$.
	\end{lemma}
	
	\begin{proof} It follows from standard a priori estimates.
	\end{proof}

Absolutely, the next step involves estimating the derivatives of $u$ and $p$ with respect to $x$ and $t$.

\begin{lemma}
	\label{c.En.Est.g.ks}
Suppose that 
$s \in \mathbb{N}$,
$k \in \mathbb{Z}_+$,
and 
$\mathfrak{s}$, $\mathfrak{r}$ satisfy $2/\mathfrak{s} + 2n/\mathfrak{r} = 1$. 
If 
$(u, p) \in B^{k, 2s, s}_{\mathrm{vel}, q} \times B^{k+1, 2(s-1), s-1}_{\mathrm{pre}, q-1}$
is a solution of equations \eqref{zad} 
with the data 
$(f, u_0) \in B^{k, 2(s-1), s-1}_{\mathrm{for}, q} \times \mathbf{H}^{2s+k}_{q}$,
then it satisfies the estimate
\begin{equation}
	\label{eq.En.Est.Bks}
	\| (u, p) \|_{B^{k, 2s, s}_{\mathrm{vel}, q} \times B^{k+1, 2(s-1), s-1}_{\mathrm{pre}, q-1}}
	\leq
	c(k, s, (f, u_0), u),
\end{equation}
where the constant on the right-hand side depends on
$\| f \|_{B^{k, 2(s-1), s-1}_{\mathrm{for}, q}}$,
$\| u_0 \|_{\mathbf{H}^{2s+k}_{q}}$,
and
$\| u \|_{L^\mathfrak{s}(I, {L}^\mathfrak{r})}$,
as well as on
$\mathfrak{r}$, $T$, $\mu$, etc.
\end{lemma}
	
	\begin{proof}
	Indeed, it follows from H?lder's and Gagliardo-Nirenberg inequalities.
	\end{proof}

Now we need to show that the image of the mapping \eqref{eq.map.A} is closed if the given subset $S = S_{\mathrm{vel}, q} \times 
S_{\mathrm{pre}, q-1}$ of the Cartesian product 
$B^{k, 2s, s}_{\mathrm{vel}, q} \times B^{k+1, 2(s-1), s-1}_{\mathrm{pre}, q-1}$
is such that the image $\mathcal{A}^{q}(S)$ is precompact in the space
$B^{k, 2(s-1), s-1}_{\mathrm{for}, q} \times \mathbf{H}^{2s+k}_{q}$,
then the set $S_{\mathrm{vel}, q}$ is bounded in the space
$L^{\mathfrak{s}}(I, {L}^{\mathfrak{r}}_{q})$ with the pair $\mathfrak{s}$, $\mathfrak{r}$ satisfying 
$2/\mathfrak{s} + 2n/\mathfrak{r} = 1$.

Let the pair
$(f, u_0) \in B^{k, 2(s-1), s-1}_{\mathrm{for}, q} \times \mathbf{H}^{2s+k}_{q}$
belong to the closure of the image of the operator $\mathcal{A}^{q}$.
Then there exists a sequence $\{ (u_i, p_i) \}$ in
$B^{k, 2s, s}_{\mathrm{vel}, q} \times B^{k+1, 2(s-1), s-1}_{\mathrm{pre}, q-1}$
such that the sequence
$\{ (f_i, u_{i,0}) = \mathcal{A}^{q}(u_i, p_i) \}$
converges to $(f, u_0)$ in the space $B^{k, 2(s-1), s-1}_{\mathrm{for}, q} \times \mathbf{H}^{2s+k}_{q}$.

Consider the set $S = \{ (u_i, p_i) \}$.
Since the image $\mathcal{A}^{q}(S) = \{ (f_i, u_{i,0}) \}$ is precompact in
$B^{k, 2(s-1), s-1}_{\mathrm{for}, q} \times \mathbf{H}^{2s+k}_{q}$,
it follows from our assumption that the subset
$S_{\mathrm{vel}, q} = \{ u_i \}$ of
$B^{k, 2s, s}_{\mathrm{vel}, q}$
is bounded in the space 
$L^\mathfrak{s}(I, {L}^{\mathfrak{r}}_{q})$. 

By applying Lemmas \ref{p.En.Est.u.strong} and \ref{c.En.Est.g.ks}, we conclude that the sequence
$\{ (u_i, p_i) \}$
is bounded in the space
$ B^{k, 2s, s}_{\mathrm{vel}, q} \times B^{k+1, 2(s-1), s-1}_{\mathrm{pre}, q-1} $.
By definition of $B^{k, 2s, s}_{\mathrm{vel}, q}$, the sequence $\{ u_i \}$ is bounded in
$C (I, \mathbf{H}^{k+2s}_{q})$ and
$L^2 (I, \mathbf{H}^{k+2s+1}_{q})$,
and the partial derivatives $\{ \partial_t^j u_i \}$ with respect to time for $1 \leq j \leq s$
are bounded in
$C (I, \mathbf{H}^{k+2(s-j)}_{q})$ and
$L^2 (I, \mathbf{H}^{k+2(s-j+1)}_{q})$.
Therefore, there exists a subsequence $\{ u_{i_k} \}$ such that:
\begin{enumerate}
	\item The sequence
	$\{ \partial^{\alpha+\beta}_x \partial_t^j u_{i_k} \}$
	converges weakly in $L^2 (I, {L}^{2}_{q})$ provided that
	$|\alpha| + 2j \leq 2s$ and
	$|\beta| \leq k+1$;
	\item The sequence
	$\{ \partial^{\alpha+\beta}_x \partial_t^j u_{i_k} \}$
	converges weakly$^*$ in $L^\infty (I, {L}^{2}_{q})$ provided that
	$|\alpha| + 2j \leq 2s$ and
	$|\beta| \leq k$.   
\end{enumerate}
Clearly, the limit $u$ of the sequence $\{ u_{i_k} \}$ is a solution to 
the equations \eqref{zad}, such that
\begin{enumerate}
	\item Each derivative $\partial^{\alpha+\beta}_x \partial_t^j u$ 
	belongs to $L^2 (I, \mathbf{H}^{0}_{q})$ provided that 
	$|\alpha| + 2j \leq 2s$ and $|\beta| \leq k+1$; 
	\item Each derivative $\partial^{\alpha+\beta}_x \partial_t^j u$
	belongs to $L^\infty (I, \mathbf{H}^{0}_{q})$ provided that
	$|\alpha| + 2j \leq 2s$ and $|\beta| \leq k$. 
\end{enumerate}
From the energy estimates and Gr?nwall's lemma, it follows that such a strong solution will be unique (see, for example, \cite{Temam79} for the Navier-Stokes equations).
Moreover, if 
\begin{equation}
	\label{eq.0js-1}
	0 \, \leq \, j
	\leq
	s-1,
	\quad
	|\alpha| + 2j
	\leq
	2s,
	\quad
	|\beta|
	\leq
	k,
\end{equation}
then
$\partial^{\alpha+\beta}_x \partial_t^j u \in L^2 (I, \mathbf{H}^{1}_{q})$ and
$\partial^{\alpha+\beta}_x \partial_t^{j+1} u \in L^2 (I, (\mathbf{H}^{1}_{q})')$.
It follows that $\partial^{\alpha+\beta}_x \partial_t^j u \in C (I, \mathbf{H}^{0}_{q})$
for all $j$, $\alpha$, and $\beta$ satisfying (\ref{eq.0js-1}). 
Hence, $u$ belongs to the space
$B^{k+2, 2(s-1), s-1}_{\mathrm{vel}, q}$.
Moreover, using formula \eqref{eq.B.pos.bound}
for $w = u$, we obtain that the derivatives
$\partial^{\alpha+\beta}_x \partial_t^j \mathcal{N}^q u$
belong to $C (I, {L}^2_{q})$ for all $j$, $\alpha$, $\beta$ satisfying
inequalities (\ref{eq.0js-1}).

Furthermore, the operator ${P}^{q}$ maps
$C (I, {L}^2_{q})$ continuously into
$C (I, {L}^2_{q})$.
Therefore, since $u$ is a solution to \eqref{zad}, we have
$$
\partial^{\beta}_x \partial_t^s u
= \partial^{\beta}_x \partial_t^{s-1} \mu \varDelta u
- \partial^{\beta}_x \partial_t^{s-1} {P}^q \mathcal{N}^q u
+ \partial^{\beta}_x \partial_t^{s-1} {P}^q f
$$
which belongs to $C (I, \mathbf{H}^{0}_{q})$ for all multi-indices $\beta$ such that $|\beta| \leq k$.
In other words, $u \in B^{k, 2s, s}_{\mathrm{vel}, q}$.
Finally, from Lemma \ref{p.nabla.Bochner} it follows that there exists $p \in B^{k+1, 2(s-1), s-1}_{\mathrm{pre}, q-1}$
such that 
$$
\overline{\partial}^{q-1} p
= (I - {P}^q) (f - \mathcal{N}^q u),
$$
i.e., the pair $(u, p) \in B^{k, 2s, s}_{\mathrm{vel}, q} 
\times B^{k+1, 2(s-1), s-1}_{\mathrm{pre}, q-1}$ is a solution of \eqref{zad}.

Thus, we have proved that the image of the mapping in \eqref{eq.map.A} is closed.
It follows that the mapping \eqref{eq.map.A} is surjective.

\end{proof}

\bigskip

\textit{This research is supported by the Krasnoyarsk Mathematical Center and funded by the Ministry of Science and Higher Education of the Russian Federation (Agreement No. 075-02-2024-1429).}

\end{document}